\documentclass[reqno]{amsart}
\usepackage{graphicx} 

\usepackage{mathtools}
\mathtoolsset{showonlyrefs}

\usepackage{amsmath,amssymb,amsfonts,amsthm}
\usepackage{fancyvrb}
\usepackage{inconsolata}

\usepackage[mathscr]{eucal}
\usepackage{bbm}
\usepackage{array}
\usepackage{mathdots}
\usepackage{ytableau}
\usepackage{longtable}
\usepackage{nth}

\usepackage{xcolor}

\usepackage{hyperref}
\newcommand\myshade{85}
\colorlet{mylinkcolor}{red}
\colorlet{mycitecolor}{blue}
\colorlet{myurlcolor}{magenta}
\hypersetup{
	linkcolor  = mylinkcolor,
	citecolor  = mycitecolor,
	urlcolor   = myurlcolor!\myshade!black,
	colorlinks = true,
}

\usepackage{tikz}

\usetikzlibrary{arrows,math,calc,shapes.geometric}
\usepackage[all,cmtip]{xy}

\usepackage{microtype}

\usepackage{enumerate}
\usepackage[shortlabels]{enumitem}

\usepackage{etoolbox}
\patchcmd{\section}{\scshape}{\bfseries}{}{}
\makeatletter
\renewcommand{\@secnumfont}{\bfseries}
\makeatother

\theoremstyle{definition}
\newtheorem{thm}{Theorem}
\newtheorem*{thm*}{Theorem}
\newtheorem{prop}[thm]{Proposition} 

\newtheorem{defi}[thm]{Definition} 

\newtheorem{rem}[thm]{Remark}
\newtheorem{ex}[thm]{Example}

\newtheorem*{quest*}{Question}


\newcommand{\ala}[1]{\widehat{\mathfrak{#1}}}


\newcommand{\ZZ}{\mathbb{Z}}


\newcommand{\ds}{\displaystyle}

\newcommand{\rel}{\text{rel}}

\newcommand{\vv}{{\vec{v}}}
\newcommand{\vn}{{\vec{n}}}

\newcommand{\vz}{{\vec{0}}}

\title{Tight cylindric partitions}
\author{Shashank Kanade}
\address{Department of Mathematics, University of Denver, Denver, CO 80208}
\email{shashank.kanade@du.edu}

\author{Matthew C.\ Russell}
\address{Department of Statistics, Texas A\&M University, College Station, TX 77843}
\email{matthewcrussell@tamu.edu}

\allowdisplaybreaks

\begin{document}
\begin{abstract}
In this note, we initiate the study of generating functions for tight cylindric partitions. 
For general (i.e., $r$-rowed for $r\geq 2$) tight cylindric partitions, we provide analogs of the Corteel--Welsh functional equations. We prove closed forms for the bivariate generating functions for 2-rowed tight cylindric partitions. We also show that these partitions are in bijection with a class of partitions studied by Dousse, Hardiman, and Konan. 
\end{abstract}
\maketitle

\section{Overview}

It is well known (\cite{Tin-three}, \cite{FodWel}, etc.) that the univariate generating function of $r$-rowed cylindric partitions of a profile $c$ is related to the principally specialized character of the integrable $\ala{sl}_r$ module of highest weight $\Lambda(c)$, up to a factor of ${(q^r;q^r)_\infty^{-1}}$:
\begin{align}
	\mathrm{g.f.} \left( \mathscr{C}_c \right) = {(q^r;q^r)_\infty^{-1}}\cdot  {\mathrm{Pr}\left( L_{\ala{sl}_r}(\Lambda(c))\right)}.
	\label{eqn:cylchar}
\end{align}

Cylindric partitions with $r$-rows and a fixed profile  can be thought of as restrictions of skew plane partitions to $r$-rows, along with certain periodic ``boundary conditions''. These were first studied by Gessel and Krattenthaler \cite{GesKra} who used the Gessel--Viennot--Lindstr\"om principle to get determinantal formulas for their univariate generating functions. These formulas are much more general and involve a lot of flexibility.
Importantly, Foda and Welsh \cite{FodWel} recognized that simply bounding the largest part in the cylindric partitions to be less than $a$, using the Gessel--Krattenthaler determinantal formula, taking the limit as $a\rightarrow\infty$, and finally using the type $\mathrm{A}_{r-1}^{(1)}$ Macdonald identity yields a beautiful product form for the univariate generating function. This product form was also found by Borodin \cite{Bor} in the context of integrable probability.
In any case, the lesson is that the ``maximum part'' statistic also plays an important role in the theory of cylindric partitions. We shall use the variable $z$ to keep track of this statistic.

Indeed, Corteel and Welsh \cite{CorWel} found functional equations for the bivariate generating functions for cylindric partitions. These equations have proved very useful in proving closed form formulas for the bivariate (and thus univariate), generating functions of cylindric partitions (see \cite{CorWel}, \cite{CorDouUnc}, \cite{KanRus-cyl}, etc.). While the case of two rows is fairly straightforward (see Remark \ref{rem:allcyl2rowzq} below), the case of three rows is already very intriguing and the corresponding bivariate conjectural formulas remain open (except for some small profiles) \cite{KanRus-cyl}.
It is natural that refining the generating functions with more statistics is an important yet difficult process.

As mentioned above, our interest in cylindric partitions stems from the fact that they are intricately linked to the combinatorics of standard (i.e., integrable highest-weight) modules for $\ala{sl}_r$. This connection goes back to the work of the ``Kyoto School'' \cite{DatJimKunMiwOka}, \cite{JimMisMiwOka}, explained further in \cite{Tin-three} and \cite{FodWel}.
Yet, as is visible in \eqref{eqn:cylchar}, the univariate generating function of cylindric partitions differs from principally specialized characters by a somewhat undesirable factor.
The reason for this phenomenon appears to be the fact that the $\ala{sl}_r$ crystal graph on the cylindric partitions of a fixed profile is not connected \cite{Tin-three}.

Thankfully, the objects that directly correspond to $\mathrm{Pr}\left(L_{\ala{sl}_r}(\Lambda(c))\right)$ are the \emph{tight} or \emph{highest lift} cylindric partitions,
i.e. (see \cite[Eq.\ (33)]{FodWel}),
\begin{align}
	\mathrm{g.f.} \left( \mathscr{T}_{c} \right) = {\mathrm{Pr}\left( L_{\ala{sl}_r}(\Lambda(c))\right)}.
	\label{eqn:tightcylchar}
\end{align}
The reason these are called ``tight'' is that, when modeling cylindric partitions by abaci, tight cylindric partitions are precisely those whose yokes are as close together as possible \cite[Sec.\ 4.7 and footnote 14]{FodWel}.

The aim of this note is to initiate the study of bivariate generating functions of the tight cylindric partitions by  focusing first on the $2$-rowed case. We achieve the following:
\begin{enumerate}
	\item Analogs of the Corteel--Welsh \cite{CorWel} functional equations for tight cylindric partitions (Theorem \ref{thm:CWrec}).
	\item A description and proof of the  closed-form formulas for the bivariate generating functions for $2$-rowed tight cylindric partitions (Theorem \ref{thm:bivar2rowtight}).  
	\item Alternate proofs of a combinatorial identity of Dousse, Hardiman, and Konan \cite{DouHarKon} using the \eqref{eqn:tightcylchar} for 2-rowed tight cylindric partitions. This is not surprising, as both the objects are governed by $\ala{sl}_2$ crystals 
    (Section \ref{sec:DHKpartitions}).
\end{enumerate}

\subsection*{Acknowledgments}
SK gratefully acknowledges the support from Simons Travel Support for Mathematicians. 
We thank S.\ Ole Warnaar for many illuminating discussions. The survey \cite{MSZsurvey} was very helpful in some preliminary investigations.

\section{Basic definitions}

We start by recalling the definition of cylindric partitions.

\begin{defi} \label{def:cylp}
Given $c=(c_1,\cdots,c_r)\in \ZZ_{\geq 0}^r$, a cylindric partition of profile $c$ is an $r$-tuple of (possibly empty) partitions $\pi=(\pi^{(1)},\cdots, \pi^{(r)})$ such that for all $i,j$:
\begin{enumerate}
\item $\pi^{(i)}_j\geq \pi^{(i+1)}_{j+c_{(i+1)}}$
\item $\pi^{(k)}_j\geq \pi^{(1)}_{j+c_1}$.
\end{enumerate}
Let $\mathscr{C}_c$ be the set of cylindric partitions of a profile $c$.
\end{defi} 
\begin{defi}\label{def:ranklevelhtwt}
Given a profile $c=(c_1,\cdots, c_r)$ we have the corresponding dominant integral highest weight for $\ala{sl}_r$ given as:
\begin{align}
    \Lambda(c) = c_r\Lambda_0 + c_1\Lambda_1 + \cdots +c_{r-1}\Lambda_{r-1}.
    \label{eqn:domwt}
\end{align}
We thus say that $r$ is the rank and $\ell=c_1+\cdots+c_r$ is the level of $c$.
\end{defi}    
\begin{defi}\label{def:tightcylp}
	We say that a cylindric partition $\pi=(\pi^{(1)},\cdots, \pi^{(r)})$ of profile $c$ is \emph{tight} if for every $j\in \ZZ_{>0}$, there exists a $\pi^{(i)}$ that does not contain $j$ as a part. 
	Let $\mathscr{T}_c$ be the set of tight cylindric partitions of profile $c$.
\end{defi}
	
Let $\mathrm{max}(\pi)$ denote the largest part of a cylindric partition $\pi$ and $\mathrm{wt}(\pi)$ denote the sum of all entries of $\pi$. We let the generating functions of all, respectively, tight cylindric partitions of profile $c$ be:
\begin{align}
C_c(z,q) = \sum_{\pi \in \mathscr{C}_c} z^{\mathrm{max}(\pi)}q^{\mathrm{wt}(\pi)},\quad\quad
T_c(z,q) = \sum_{\pi \in \mathscr{T}_c} z^{\mathrm{max}(\pi)}q^{\mathrm{wt}(\pi)}.
\end{align}

\begin{thm}
	\label{thm:tightprod}
	For a profile $c=(c_1,c_2,\cdots, c_r)$, we have that: 
	\begin{align}
		&T_c(1,q) = \mathrm{Pr}(L_{\ala{sl}_r}(\Lambda(c)))
        &=
        \dfrac{(q^r;q^r)_\infty(q^m;q^m)_\infty^{r-1}}{(q)_\infty^r}
        \prod_{1\leq i<j\leq r}
        \theta(q^{j-i+c_i+\cdots+c_{j-1}};q^m)
	\end{align}
	where $m=r+\ell$, $\Lambda(c)$ is as in \eqref{eqn:domwt}, and $\theta(a;q)=(a;q)_\infty(q/a;q)_\infty$.
\end{thm}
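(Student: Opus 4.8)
The statement packages two facts. The first equality, $T_c(1,q)=\mathrm{Pr}(L_{\ala{sl}_r}(\Lambda(c)))$, is exactly \eqref{eqn:tightcylchar}, which we take from \cite{FodWel}; so the real content is the product evaluation of the principally specialized character. My plan is to establish that product directly, by principally specializing the Weyl--Kac character formula. As a cross-check (and a shortcut), observe that \eqref{eqn:cylchar} and \eqref{eqn:tightcylchar} together give $T_c(1,q)=(q^r;q^r)_\infty\, C_c(1,q)$, so the claim is equivalent, after clearing the factor $(q^r;q^r)_\infty$, to the Borodin \cite{Bor} / Foda--Welsh \cite{FodWel} product for the full generating function $C_c(1,q)$; one may simply quote that product and match factors. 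I prefer the self-contained derivation, since it exhibits the origin of each factor.

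Let $F$ denote the principal specialization, the algebra homomorphism with $F(e^{-\alpha_i})=q$ for every simple root $\alpha_i$ of $\ala{sl}_r$ (including the affine node $\alpha_0$); equivalently $F(e^{\lambda})=q^{-\langle\rho^\vee,\lambda\rangle}$ with $\langle\rho^\vee,\alpha_i\rangle=1$ for all $i$. Writing the Weyl--Kac formula as $\mathrm{ch}\,L(\Lambda)=N/D$ with numerator $N=\sum_{w}\sgn(w)\,e^{w(\Lambda+\rho)-\rho}$ and denominator $D=\prod_{\alpha>0}(1-e^{-\alpha})^{\mathrm{mult}\,\alpha}$, we get $\mathrm{Pr}(L(\Lambda))=F(N)/F(D)$. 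First I would compute the denominator: since $F(e^{-\delta})=q^r$ and the imaginary roots $n\delta$ have multiplicity $r-1$, they contribute $(q^r;q^r)_\infty^{r-1}$, while the real roots $\pm\alpha_{ij}+n\delta$ (where $\alpha_{ij}=\epsilon_i-\epsilon_j$ has $\rho^\vee$-height $j-i$) contribute $\prod_{1\le i<j\le r}(q^{\,j-i};q^r)_\infty(q^{\,r-(j-i)};q^r)_\infty$. Grouping by the value of $j-i$ collapses this to $((q)_\infty/(q^r;q^r)_\infty)^r$, whence $F(D)=(q)_\infty^{\,r}/(q^r;q^r)_\infty$, which is exactly the prefactor in the theorem.

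It then remains to show $F(N)=(q^m;q^m)_\infty^{r-1}\prod_{1\le i<j\le r}\theta(q^{\,j-i+c_i+\cdots+c_{j-1}};q^m)$ with $m=r+\ell$ (dual Coxeter number plus level). I would decompose the affine Weyl group as $W=S_r\ltimes Q$, with $S_r$ the finite Weyl group and $Q$ the finite root lattice acting by translations. Summing over the translation lattice first yields, for each $S_r$-coset, a classical theta series in $q^m$, and the residual alternating sum over $S_r$ assembles these into an $\lietype{A}_{r-1}$ Vandermonde. Applying the Jacobi triple product converts each factor into $\theta(\,\cdot\,;q^m)$, the $(i,j)$ factor carrying argument $q^{\langle\Lambda+\rho,\alpha_{ij}^\vee\rangle}$; since $\langle\rho,\alpha_{ij}^\vee\rangle=j-i$ and $\langle\Lambda(c),\alpha_{ij}^\vee\rangle=c_i+\cdots+c_{j-1}$ by \eqref{eqn:domwt}, this exponent is precisely $j-i+c_i+\cdots+c_{j-1}$. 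This is exactly the Kac--Peterson evaluation of a principally specialized numerator, equivalently the type $\lietype{A}_{r-1}^{(1)}$ Macdonald identity invoked in the introduction. Dividing $F(N)$ by $F(D)$ produces the stated closed form.

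The main obstacle is this numerator step: reorganizing the affine Weyl sum through the semidirect-product structure and recognizing the resulting lattice sum as a product of theta functions with the correct arguments and modulus. The delicate part is the exponent bookkeeping, namely keeping the pairing $\langle\Lambda+\rho,\alpha_{ij}^\vee\rangle$ straight and verifying that the cross terms from translations versus finite reflections recombine cleanly into $\theta(\,\cdot\,;q^m)$ with no stray factors; by comparison the denominator computation and the two cited reductions are routine. If a fully self-contained numerator argument turns out to be unwieldy, the cleanest fallback is to quote Borodin's product \cite{Bor} for $C_c(1,q)$ and multiply by $(q^r;q^r)_\infty$ as above.
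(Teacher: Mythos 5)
Your proposal is correct and takes essentially the same route as the paper: the first equality is quoted from Foda--Welsh \cite{FodWel}, and the product evaluation you carry out---principal specialization of the Weyl--Kac formula, with the denominator collapsing to $(q)_\infty^r/(q^r;q^r)_\infty$ and the numerator handled via the decomposition $W=S_r\ltimes Q$ into theta functions of modulus $m=r+\ell$---is precisely the ``standard'' evaluation via Lepowsky's numerator formula \cite{Lep-numer} that the paper cites rather than writes out. Your exponent bookkeeping, $\langle\Lambda(c)+\rho,\alpha_{ij}^\vee\rangle = j-i+c_i+\cdots+c_{j-1}$, checks out, so you have simply filled in the details the paper delegates to references.
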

\begin{proof}
For the first equality, see \cite[Eq.\ (33) and App.\ C]{FodWel}.
The fact that $\mathrm{Pr}(L_{\ala{sl}_r}(\Lambda(c)))$ is an infinite periodic product is due to \cite{Lep-numer}. The actual evaluation is standard. 
\end{proof}

Note that, for 2-rowed profiles, the infinite product here is $\left(-q;q\right)_\infty$ times the generating function for the Gordon-Andrews or Andrews-Bressoud (depending on the parity of $\ell$) identities, with $\ell\geq 1$ and $0\leq b\leq \ell$:
\begin{equation}
T_{(b,\ell-b)}(1,q) = T_{(\ell-b,b)}(1,q) =
        \dfrac{(-q;q)_\infty\left(q^{b+1},q^{\ell-b+1},q^{\ell+2};q^{\ell+2}\right)_\infty}{(q;q)_\infty}.
\end{equation}

We now describe the $2$-stringed abaci, closely following \cite{FodWel}. Such an abacus is an infinite sequence of sites arranged in 2 strings, with beads and vacancies at each site.
To the right of any site, there are only finitely many beads, and to the left of any site, there are only finitely many vacancies. The beads are yoked together vertically, no yoke has a negative slope, yokes do not cross, and each bead belongs to a unique yoke. After moving sufficiently left, the repeating pattern of yokes has a constant shape. 

Fix $\ell\geq 1$ and $0\leq a\leq \ell$.
We say that a yoke has shape $a$ if the top bead is $a$ places to the right of the bottom bead. We say that an abacus is of type $\mathcal{A}_{\ell-a,a}$  if only yokes of shape $0,1,2,\cdots,\ell$ appear in the abacus, and infinitely to the left, only yokes of shape $a$ appear.

We now associate an abacus of type $\mathcal{A}_{\ell-a,a}$ to a cylindric partition of profile $(\ell-a,a)$ as follows. 
The $s^{\text{th}}$ string of the abacus corresponds to the $s^{\text{th}}$ row of the cylindric partition. Scanning the $s^{\text{th}}$ row of cylindric partition from left to right, the $i^{\text{th}}$ entry gives the number of beads that appear on the $s^{\text{th}}$ string to the right of the $i^{\text{th}}$ vacancy (counting the vacancies from left to right).

The right-most yoke belonging to the infinite sea of repeating yokes (of shape $a$) is referred to as the zeroth yoke, and we number the yokes successively to the right.

\begin{ex}
\label{ex:tightcylp}
Our working example is the following cylindric partition $\pi=(\pi^{(1)},\pi^{(2)})$ of profile $(3,0)$.
\begin{align*}
\begin{matrix}
\textcolor{gray}{\pi^{(2)} \rightarrow } & & & &\textcolor{gray}{9} & \textcolor{gray}{5} & \textcolor{gray}{1} \\
\pi^{(1)} \rightarrow & 10 & 8 & 4 & 3 &3 \\
\pi^{(2)} \rightarrow &  9& 5 & 1
\end{matrix}
\end{align*}
which corresponds to the following abacus of type $\mathcal{A}_{3,0}$:
\begin{center}
\begin{tikzpicture}[scale=0.5]
    \draw[dotted] (-2.75,1) -- (17.75,1);
    \draw[dotted] (-2.75,0) -- (17.75,0);
	\node at (-3,0.5) {$\cdots$};
	
	\node at (-2,1) {$\bullet$};
	\node at (-1,1) {$\bullet$};
	
	\node at (0,1) {$\bullet$};
	\node[fill=white,inner sep=0] at (1,1) {$\circ$};
	\node at (2,1) {$\bullet$};
	\node at (3,1) {$\bullet$};
	\node[fill=white,inner sep=0] at (4,1) {$\circ$};
	\node at (5,1) {$\bullet$};
	\node at (6,1) {$\bullet$};
	\node at (7,1) {$\bullet$};
	\node at (8,1) {$\bullet$};
	\node[fill=white,inner sep=0] at (9,1) {$\circ$};
	\node at (10,1) {$\bullet$};
	\node[fill=white,inner sep=0] at (11,1) {$\circ$};
	\node[fill=white,inner sep=0] at (12,1) {$\circ$};
	\node at (13,1) {$\bullet$};
	\node at (14,1) {$\bullet$};
	\node at (15,1) {$\bullet$};

	\node at (-2,0) {$\bullet$};
	\node at (-1,0) {$\bullet$};
	
	\node at (0,0) {$\bullet$};
	\node at (1,0) {$\bullet$};
	\node[fill=white,inner sep=0] at (2,0) {$\circ$};
	\node at (3,0) {$\bullet$};
	\node at (4,0) {$\bullet$};
	\node at (5,0) {$\bullet$};
	\node at (6,0) {$\bullet$};
	\node[fill=white,inner sep=0] at (7,0) {$\circ$};
	\node at (8,0) {$\bullet$};
	\node at (9,0) {$\bullet$};
	\node at (10,0) {$\bullet$};
	\node at (11,0) {$\bullet$};
	\node[fill=white,inner sep=0] at (12,0) {$\circ$};
	\node at (13,0) {$\bullet$};
	
	\node at (0,-0.5) {$\scriptstyle{0}$};
	\node at (1,-0.5) {$\scriptstyle{1}$};
	\node at (2,-0.5) {$\scriptstyle{2}$};
	\node at (3,-0.5) {$\scriptstyle{3}$};
	\node at (4,-0.5) {$\scriptstyle{4}$};
	\node at (5,-0.5) {$\scriptstyle{5}$};
	\node at (6,-0.5) {$\scriptstyle{6}$};
	\node at (7,-0.5) {$\scriptstyle{7}$};
	\node at (8,-0.5) {$\scriptstyle{8}$};
	\node at (9,-0.5) {$\scriptstyle{9}$};
	\node at (10,-0.5) {$\scriptstyle{10}$};
	\node at (11,-0.5) {$\scriptstyle{11}$};
	\node at (12,-0.5) {$\scriptstyle{12}$};
	\node at (13,-0.5) {$\scriptstyle{13}$};
	\node at (14,-0.5) {$\scriptstyle{14}$};
	\node at (15,-0.5) {$\scriptstyle{15}$};

    \node at (-5,1) {\scriptsize string $1 \rightarrow$};
    \node at (-5,0) {\scriptsize string $2 \rightarrow$};
    
	\draw (-2,0) -- (-2,1);
	\draw (-1,0) -- (-1,1);
	\draw (0,0) -- (0,1);
	\draw (1,0) -- (2,1);
	\draw (3,0) -- (3,1);
	\draw (4,0) -- (5,1);
	\draw (5,0) -- (6,1);
	\draw (6,0) -- (7,1);
	\draw (8,0) -- (8,1);
	\draw (9,0) -- (10,1);
	\draw (10,0) -- (13,1);
	\draw (11,0) -- (14,1);
	\draw (13,0) -- (15,1);
	
	\node at (14,0) {$\circ$};
	\node at (15,0) {$\circ$};
	\node at (16,0) {$\circ$};
	\node at (16,1) {$\circ$};
	\node at (17,0) {$\circ$};
	\node at (17,1) {$\circ$};
	
	\node at (18,0.5) {$\cdots$};
	
\end{tikzpicture}
\end{center}

In this abacus, the zeroth yoke connects the sites at location $0$ on both the strings, and has shape $0$.  To the left of this yoke, we have an infinite sea of repeating yokes of shape $0$. To the right, the
first yoke has shape $1$ and joins the bead at site $2$ on string $1$ with the bead at site $1$ on string $2$. Continuing,  there are a total of $10$ yokes to the right of the zeroth yoke, and the last, i.e., the $10^{\text{th}}$, yoke connects the beads at site $15$ on string $1$ and site $13$ on string $2$. To the right of the $10^{\text{th}}$ yoke, we have an infinite sea of vacancies on each string.
\end{ex}

\begin{defi}
	A $2$-stringed abacus is called {\it tight} if none of the yokes could be shifted to the left. 
\end{defi}

\begin{thm}(\cite[Sec. 4.7, 4.8, App.\ C]{FodWel})
	A $2$-rowed cylindric partition is tight if and only if the corresponding abacus is tight. 
\end{thm}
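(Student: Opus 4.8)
The plan is to translate both notions of tightness into a single common statistic, namely the gaps between consecutive beads on each string, and then to observe that the two conditions literally coincide. Throughout, fix a string index $s\in\{1,2\}$ and let $P^{(s)}_1>P^{(s)}_2>\cdots$ be the positions of all beads on the $s^{\text{th}}$ string, listed in decreasing order, so that $P^{(s)}_m$ is the $m^{\text{th}}$ bead from the right; this is well defined because only finitely many beads lie to the right of any site. I would first record that the yokes pair beads of equal rank-from-the-right: since every bead lies on a unique yoke, no yoke has negative slope, and yokes do not cross, the (monotone, hence forced) matching consistent with the repeating sea on the left pairs $P^{(1)}_m$ with $P^{(2)}_m$ for every $m\geq 1$; I will call this yoke $[m]$. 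The tightly packed sea also guarantees that $P^{(s)}_m-P^{(s)}_{m+1}=1$ for all large $m$ on both strings, so only finitely many indices are ever relevant.

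The crux is a dictionary (call it Step 1) between the parts of $\pi^{(s)}$ and these gaps:
\begin{align}
 j \text{ is a part of } \pi^{(s)} \iff P^{(s)}_j - P^{(s)}_{j+1}\geq 2 .
\end{align}
To establish it, let $f(x)$ denote the number of beads at sites strictly greater than $x$; then $f(x)=j$ exactly on the range $P^{(s)}_{j+1}\leq x< P^{(s)}_j$. By the defining recipe for the parts, a part of $\pi^{(s)}$ equals $j$ if and only if some \emph{vacancy} $x$ satisfies $f(x)=j$. Now $P^{(s)}_{j+1}$ is itself a bead, while every site strictly between the consecutive beads $P^{(s)}_{j+1}$ and $P^{(s)}_j$ is a vacancy; hence the range above contains a vacancy precisely when $P^{(s)}_j-P^{(s)}_{j+1}\geq 2$, and when the gap equals $1$ the range is the single bead $P^{(s)}_{j+1}$, contributing no part.

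Step 2 is the (more mechanical) analysis of the shift operation. Shifting yoke $[m]$ one step to the left moves each of its beads $P^{(1)}_m,P^{(2)}_m$ to the site immediately to its left, leaving the shape unchanged; the result is again a legal abacus of the same type if and only if both target sites are vacant, and, because yokes are matched by rank, the site immediately left of $P^{(s)}_m$ is vacant if and only if $P^{(s)}_m-P^{(s)}_{m+1}\geq 2$ (an occupied target site would be precisely the neighbouring bead $P^{(s)}_{m+1}$, which is also exactly the non-crossing obstruction, so nothing further need be checked). Therefore yoke $[m]$ can be shifted left iff $P^{(1)}_m-P^{(1)}_{m+1}\geq 2$ and $P^{(2)}_m-P^{(2)}_{m+1}\geq 2$, and the abacus is tight iff for every $m$ at least one of these two gaps equals $1$.

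Combining the steps finishes the argument. By Definition \ref{def:tightcylp}, the cylindric partition $\pi$ is tight iff no positive integer $j$ is a part of both $\pi^{(1)}$ and $\pi^{(2)}$; by Step 1 this says that for every $j$ we never have both $P^{(1)}_j-P^{(1)}_{j+1}\geq 2$ and $P^{(2)}_j-P^{(2)}_{j+1}\geq 2$. Since the part index and the yoke index coincide, this is identical to the abacus condition of Step 2, giving the claimed equivalence. I expect the main obstacle to be Step 1: the parts are prescribed through \emph{vacancies} (beads to the right of each vacancy), and the genuine work is converting this into the consecutive-\emph{bead} gap statistic, while correctly handling the semi-infinite bookkeeping and the interface between the active region and the sea so that no spurious parts or shifts are introduced. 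Once this dictionary is in hand, Steps 2 and 3 are routine.
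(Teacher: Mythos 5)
Your proof is correct, but it is necessarily a different route from the paper's, because the paper offers no proof at all: it states this theorem with a citation to Foda--Welsh (Sec.\ 4.7, 4.8, App.\ C), where the correspondence is established inside the general $r$-stringed abacus/highest-lift machinery. Your argument is a self-contained, two-string-specific reduction of both tightness notions to the single gap statistic $P^{(s)}_m-P^{(s)}_{m+1}$, and all three steps check out: the non-crossing perfect matching of beads does force rank-matching of yokes (your crossing argument for rank $1$ plus induction is the right one-line justification); the dictionary ``$j$ is a part of $\pi^{(s)}$ iff $P^{(s)}_j-P^{(s)}_{j+1}\geq 2$'' follows exactly as you say from $f(x)=j$ on $P^{(s)}_{j+1}\leq x<P^{(s)}_j$ together with the fact that all sites strictly between consecutive beads are vacancies (and it even refines the theorem: the multiplicity of $j$ in row $s$ is the gap minus one); and a one-step shift of yoke $[m]$ preserves shape and type, so vacancy of the two target sites is indeed the only obstruction. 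One point worth a sentence in a polished write-up: the definition of a tight abacus says no yoke ``could be shifted to the left,'' not merely shifted by one step, and your criterion patches this for free --- if every rank $n$ has $\min\bigl(P^{(1)}_n-P^{(1)}_{n+1},\,P^{(2)}_n-P^{(2)}_{n+1}\bigr)=1$, then no leftward translation of any yoke by any amount can land legally, since a legal landing spot strictly inside the rank-$n$ gap on both strings (forced by non-crossing) would require both gaps at rank $n$ to be at least $2$. Compared with Foda--Welsh, your proof buys elementarity and an explicit local criterion at each bead rank, while their treatment buys generality (all $r$, arbitrary level) and the connection to weights and characters that the present paper relies on elsewhere.
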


\begin{ex}
The cylindric partition and the corresponding abacus in Example \ref{ex:tightcylp} are tight.
\end{ex}

\begin{rem}
We have only recalled the $2$-stringed abaci here, but this works more generally for $r$-stringed abaci \cite{FodWel}. The conventions for abaci in \cite{Tin-three} are different.
\end{rem}

\section{Functional Equations}
\subsection{Correcting the Corteel--Welsh recurrences}

For general $r$-rowed tight cylindric partitions of profile $c=(c_1,c_2,\cdots, c_{r})$, the following recurrences hold.
\begin{thm}
\label{thm:CWrec}
Let $c=(c_1,\cdots,c_r)$ with $r\geq 2$ and $\ell=c_1+\cdots+c_r\geq 1$.
Let $I_c \subseteq \{1,2,\cdots,r\}$, where $i\in I_c$ if and only if $c_i>0$. 
For a subset $J\subseteq I_c$, define the composition $c(J)$ as in \cite[Sec.\ 3]{CorWel}:
\begin{align}
c_i(J)=\begin{cases}
c_i-1 & i\in J, (i-1)\not\in J\\
c_i+1 & i\not\in J, (i-1)\in J\\
c_i & \mathrm{otherwise}
\end{cases}
\end{align}
where $c_0=c_r$.
Then,
\begin{align}
T_{c}(z,q) + \dfrac{zq^{r}}{1-zq^r}T_c(zq^r,q)= \sum_{\emptyset \neq J\subseteq I_c} (-1)^{|J|-1}\dfrac{T_{c(J)}{(zq^{|J|},q)}}{1-zq^{|J|}}
\label{eqn:tightCWrec}
\end{align}
\end{thm}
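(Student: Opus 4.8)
The plan is to run the Corteel--Welsh inclusion--exclusion \cite{CorWel} directly on tight cylindric partitions, keeping track of where the tightness constraint alters the resulting alternating sum. The basic operation is this: given a nonempty $J\subseteq I_c$, a tight cylindric partition $\sigma$ of profile $c(J)$, and an integer $M\geq\mathrm{max}(\sigma)$, prepend a single new part equal to $M$ to each row indexed by $J$. Inserting a part at the front of row $i$ but not row $i-1$ lowers the offset $c_i$ by one, inserting into row $i-1$ but not row $i$ raises it by one, and inserting into both or neither leaves $c_i$ unchanged; this is exactly the definition of $c(J)$ with the cyclic convention $c_0=c_r$, so the output is a cylindric partition of profile $c$. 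On generating functions this operation is recorded by $T_{c(J)}(zq^{|J|},q)/(1-zq^{|J|})$: the substitution $z\mapsto zq^{|J|}$ supplies the weight $q^{|J|M}$ of the $|J|$ inserted parts, while $1/(1-zq^{|J|})=\sum_{k\geq 0}(zq^{|J|})^k$ lets the common new value $M$ range over all integers $\geq\mathrm{max}(\sigma)$.

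I would then compute, for a fixed cylindric partition $\pi$ of profile $c$ with $M=\mathrm{max}(\pi)$, its total signed multiplicity on the right-hand side. Letting $S$ be the set of rows of $\pi$ that contain $M$, a triple $(\sigma,M,J)$ produces $\pi$ exactly when $J\subseteq S\cap I_c$ and $\sigma$ is obtained by deleting one copy of $M$ from each row in $J$, and this $\sigma$ contributes only when it is tight. Hence the coefficient contributed by $\pi$ is $\sum_{J}(-1)^{|J|-1}$, summed over nonempty $J\subseteq S\cap I_c$ for which the peeled $\sigma$ is tight. The restriction to $J\subseteq I_c$ is genuinely forced: when $c_i=0$ the offset-zero inequality $\pi^{(i-1)}_j\geq\pi^{(i)}_j$ gives $i\in S\Rightarrow(i-1)\in S$, which both pins the elements of $S$ outside $I_c$ to their left neighbors, so that $S\cap I_c\neq\emptyset$ for $M>0$, and guarantees that peeling an $M$ from a row in $J\subseteq I_c$ returns a genuine cylindric partition of profile $c(J)$.

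The tightness analysis then splits this fiber sum. Since every inserted part equals $M$, for a value $v<M$ a given row of $\pi$ contains $v$ if and only if the corresponding row of every peeled $\sigma$ does; thus $\pi$ is tight below level $M$ if and only if all the $\sigma$ are, and if $\pi$ violates tightness at some $v<M$ then the fiber sum is empty and $\pi$ does not contribute. When $\pi$ is tight below $M$, tightness of $\sigma$ is decided only at level $M$. If $\pi$ is genuinely tight, i.e.\ $S\neq\{1,\dots,r\}$, then every nonempty $J\subseteq S\cap I_c$ yields a tight $\sigma$, and $\sum_{\emptyset\neq J\subseteq S\cap I_c}(-1)^{|J|-1}=1$ shows $\pi$ is counted once. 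If instead $\pi$ has a full top column ($S=\{1,\dots,r\}$), the tightness requirement on $\sigma$ discards precisely those $J$ that leave some row doubly occupied at $M$, and the surviving alternating sum equals $1$ for the simple full columns (each row containing $M$ with multiplicity one) and $0$ otherwise. The surviving spurious contributions are therefore exactly the partitions consisting of a tight $\sigma'$ of profile $c$ with a single full column of value $M>\mathrm{max}(\sigma')$ adjoined to all $r$ rows; deleting that column preserves the profile and accounts for a weight $q^{rM}$, so these assemble into $\tfrac{zq^r}{1-zq^r}T_c(zq^r,q)$, which I transpose to the left to obtain \eqref{eqn:tightCWrec}.

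The main obstacle is to carry out the previous paragraph rigorously across all configurations: one must verify, for every multiplicity pattern of the value $M$ and every placement of the zeros of $c$, that imposing tightness on each intermediate $\sigma$ turns the naive binomial collapse $\sum_{\emptyset\neq J}(-1)^{|J|-1}=1$ into the clean dichotomy above --- total weight $1$ exactly for the genuinely tight $\pi$ and for the simple full-top-column $\pi$, and total weight $0$ for all other (necessarily non-tight) $\pi$. Controlling the edge cases --- repeated maximal parts, rows glued together by the $c_i=0$ forcing, and their interaction with the restriction $J\subseteq I_c$ --- is where the real work lies. Once the dichotomy is in hand, identifying the leftover simple-full-column terms with $\tfrac{zq^r}{1-zq^r}T_c(zq^r,q)$ is a routine generating-function computation.
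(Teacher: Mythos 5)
Your proposal is correct and follows essentially the same route as the paper: the same prepend/peel inclusion--exclusion on corner-max parts, the same observation that tightness below $\max(\pi)$ transfers between $\pi$ and every peeled $\sigma$, and the same binomial collapses; indeed your surviving sets $J$ (those meeting the set $U$ of rows containing $\max(\pi)$ with multiplicity one) coincide with the paper's condition $J\not\subseteq I_{\pi'}$, since $I_{\pi'}=I_c\setminus U$. The ``main obstacle'' you flag at the end is exactly the content the paper supplies: it argues that the peeled $\widetilde{\pi}$ fails to be tight precisely when $J\subseteq I_{\pi'}$, so the spurious contributions vanish via $\sum_{\emptyset\neq J\subseteq I_c,\ J\not\subseteq I_{\pi'}}(-1)^{|J|-1}=\bigl(\sum_{J\subseteq I_c}-\sum_{J\subseteq I_{\pi'}}\bigr)(-1)^{|J|-1}=0-0=0$, while the cyclic $c_i=0$ forcing you correctly identify is what shows $I_c\subseteq U$ implies $U=\{1,\dots,r\}$, so the weight-one non-tight survivors are exactly the simple full columns assembling into $\frac{zq^r}{1-zq^r}T_c(zq^r,q)$.
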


\begin{proof}

We follow the general process of \cite[Prop.\ 3.1]{CorWel} adapting it to the situation of tight cylindric partitions as needed; however, some subtleties emerge.

Fix a profile $\widetilde{c}$ with $r$ rows. Let $\mathscr{P}_{\widetilde{c}}\subseteq \mathscr{C}_{\widetilde{c}}$ be \emph{some} subset  of cylindric partitions of this profile (for instance, the tight cylindric partitions), and let $P_{\widetilde{c}}(z,q)$ be its generating function.
Now, for fixed $t\geq 0, j\geq 1$, we interpret $(zq^j)^t P_{\widetilde{c}}(zq^j;q)$.
Given a cylindric partition $\widetilde{\pi}\in \mathscr{P}_{\widetilde{c}}$ consider a new cylindric partition, say $\pi$, of a possibly different profile $c$ constructed as follows. Set $m=\mathrm{max}(\widetilde{\pi})$, and create $j$ new parts of weight $m+t$ and attach them \emph{somewhere} (to be determined precisely later) to $\widetilde{\pi}$, thus obtaining $\pi$. Now, $\mathrm{max}(\pi)=m+t$ and $\mathrm{wt}(\pi)=\mathrm{wt}(\widetilde{\pi}) + jm+jt$. Doing this 
to all $\widetilde{\pi}$, and then calculating the generating function of $\pi$ thus obtained, we have:
\begin{align}
\sum_{\widetilde{\pi}\in \mathscr{P}_{\widetilde{c}}} z^{\mathrm{max}(\widetilde{\pi})+t}q^{\mathrm{wt}(\widetilde{\pi})+j\mathrm{max}(\widetilde{\pi})+jt}
 =(zq^j)^t P_{\widetilde{c}}(zq^j,q).
 \label{eqn:addboxes}
\end{align}

The left side of \eqref{eqn:tightCWrec} corresponds to the cylindric partitions $\pi$ of profile $c$ such that either:
\begin{enumerate}
\item $\pi$ is tight, or
\item With $\pi=(\pi^{(1)}, \pi^{(2)},\cdots, \pi^{(r)})$, we have:
\begin{enumerate}
\item $\mathrm{max}(\pi)=\pi^{(1)}_1=\pi^{(2)}_1=\cdots = \pi^{(r)}_1$ (thus, these partitions are \emph{not} tight), 
\item these are the only places where $\mathrm{max}(\pi)$ appears (i.e, $\pi^{(i)}_1 > \pi^{(i)}_2$ for all $i$), and finally, 
\item removing all parts equal to $\max(\pi)$ gets us a tight cylindric partition (say, $\widetilde{\pi}$) of profile $c$.
\end{enumerate}
\end{enumerate}
We let the set of cylindric partitions satisfying the second condition be denoted by $\mathscr{P}_c$. Its generating function is 
$\frac{zq^r}{1-zq^r}T_c(zq^r;q)$
by following the process of \eqref{eqn:addboxes} for all $t\geq 1$, while adding the newly created parts at the beginning of each partition of $\widetilde{\pi}$ of profile $c$. 

Now, exactly as in \cite[Prop.\ 3.1]{CorWel}, for each $\emptyset \neq J \subseteq I_c$, we perform the procedure encapsulated in \eqref{eqn:addboxes} on \emph{tight} cylindric partitions $\pi=(\pi^{(1)},\cdots,\pi^{(r)})$ of profile $c(J)$ by attaching the newly created parts at the beginning of $\pi^{(j)}$ with $j\in J$. The resulting partitions are cylindric (but not necessarily tight) of profile $c$, and have the generating function $(zq^{|J|})^t\,T_{c(J)}(zq^{|J|};q)$. Summing over $t\geq 0$, we get $T_{c(J)}(zq^{|J|};q)/(1-zq^{|J|}).$

Our hope would have been that this process yields exactly the cylindric partitions appearing in $\mathscr{T}_c\sqcup \mathscr{P}_c$ (which are counted by the left side of \eqref{eqn:tightCWrec}), and that in the signed sum, each of these partitions gets counted exactly once. However, a priori, the right side of \eqref{eqn:tightCWrec} generates a larger class of partitions, and the extraneous partitions disappear when the signed sum is considered. We now substantiate these claims.

Exactly as in \cite{CorWel}, for $\pi\in \mathscr{C}_c$, since $\pi^{(i-1)}_1 \geq  \pi^{(i)}_1$
whenever $i \not\in I_c$ (we let $\pi^{(0)}=\pi^{(r)}$),
it must be the case that $\max(\pi)$ appears as $\pi^{(i)}_1$ for some $i\in I_c$. Let $I_\pi\subseteq I_{c}$ be the collection of all such $i$. (Note that if $\pi\in \mathscr{P}_c$ then $I_{\pi} = I_c$.) 
We refer to $\pi^{(i)}_1$ for $i\in I_\pi$ as the corner-max parts of $\pi$.

Now, let $\pi\in \mathscr{T}_c\sqcup \mathscr{P}_c$.
Now, it can be seen that $\pi$ appears in the summand corresponding to a $J$ on the right side of \eqref{eqn:tightCWrec} if and only if $\emptyset \neq J \subseteq I_\pi$.
Thus, the total contribution to $\pi$ in the signed sum on the right side is:
\begin{align*}
\sum_{\emptyset \neq J \subseteq I_\pi} (-1)^{|J|-1} = 1.
\end{align*}

Now suppose $\pi \not\in (\mathscr{T}_c \sqcup \mathscr{P}_c)$, yet appears in at least one summand on the right side of \eqref{eqn:tightCWrec}, say by adding some of the corner parts (corresponding to locations $\emptyset\neq J\subseteq I_c$) of $\pi$ to a certain tight partition $\widetilde{\pi}$ of profile $c(J)$.
Then, $\pi$ has the following properties:
\begin{enumerate}

\item Since $\widetilde{\pi}$ is tight but $\pi$ is not, it cannot be that we are adding strictly larger parts  than $\max(\widetilde{\pi})$ to obtain $\pi$. This means that new parts added to $\widetilde{\pi}$ must be equal to the $\max(\widetilde{\pi})=\max(\pi)$, and since this breaks tightness, it must also be that $\max(\pi)$ appears in each row of $\pi$. In particular, $\max(\pi)=\pi^{(i)}_1$ for all $i\in\{1,2,\cdots,r\}$.

\item Further, since $\pi \not\in \mathscr{P}_c$, it must be that $\max(\pi)=\pi^{(i)}_2$ for some $i$ as well. 
Now, removing all of the first parts $\pi^{(i)}_1$ for $i=1,2,\cdots,r$, we obtain another cylindric partition, say $\pi'$, of the same profile. Thus, again, we have the set of corner-max parts $\emptyset \neq I_{\pi'} \subseteq I_c$.

\end{enumerate}
At this point, we have a sequence of cylindric partitions $\pi' \subset \widetilde{\pi} \subset \pi$ (where $\subset$ corresponds to adding parts), with $\max(\pi')=\max(\widetilde{\pi})=\max(\pi)$.  It is helpful to keep Figure \ref{fig:tightCWproof} in mind as an example, where $m$ denotes the maximum part.

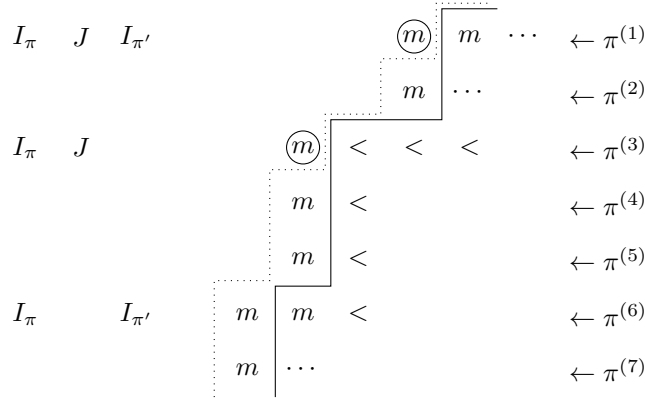
\begin{figure}
\begin{tikzpicture}[x=2.1em, y=2.1em]
\node[anchor=center,circle,draw, inner sep=0.1em] at (3,7) {$m$}; \node[anchor=center] at (4,7) {$m$}; \node[anchor=center] at (5,7) {$\cdots$};

\node[anchor=center] at (3,6) {$m$}; \node[anchor=center] at (4,6) {$\cdots$};

\node[anchor=center,inner sep=0.1em,circle,draw] at (1,5) {$m$};  \node[anchor=center] at (2,5) 
{$<$}; \node[anchor=center] at (3,5) {$<$}; \node[anchor=center] at (4,5) {$<$};

\node[anchor=center] at (1,4) {$m$};  \node[anchor=center] at (2,4) {$<$};

\node[anchor=center] at (1,3) {$m$};  \node[anchor=center] at (2,3) {$<$};
\node[anchor=center] at (0,2) {$m$}; \node[anchor=center] at (1,2) {$m$};  \node[anchor=center] at (2,2) {$<$}; 
\node[anchor=center] at (0,1) {$m$}; 
\node[anchor=center] at (1,1) {$\cdots$}; 
\draw[dotted] (-0.6,0.5) -- (-0.6,2.6) -- (0.4,2.6)--(0.4,4.6)-- (1.4,4.6) -- (1.4,5.6) --(2.4,5.6) --
(2.4,6.6) -- (3.4,6.6) --
(3.4,7.6)--(4.4,7.6);
\draw (0.5,0.5) -- (0.5,2.5) -- (1.5,2.5)--(1.5,5.5)--(3.5,5.5) -- (3.5,7.5)--(4.5,7.5);
\node at (-4,2) {$I_{\pi}$};
\node at (-4,5) {$I_{\pi}$};
\node at (-4,7) {$I_{\pi}$};
\node at (-3,5) {$J$};
\node at (-3,7) {$J$};
\node at (-2,2) {$I_{\pi'}$};
\node at (-2,7) {$I_{\pi'}$};
\node at (6.5,1) {$\leftarrow \pi^{(7)}$};
\node at (6.5,2) {$\leftarrow \pi^{(6)}$};
\node at (6.5,3) {$\leftarrow \pi^{(5)}$};
\node at (6.5,4) {$\leftarrow \pi^{(4)}$};
\node at (6.5,5) {$\leftarrow \pi^{(3)}$};
\node at (6.5,6) {$\leftarrow \pi^{(2)}$};
\node at (6.5,7) {$\leftarrow \pi^{(1)}$};
\end{tikzpicture}
\caption{An example of a cylindric partition $\pi$ is depicted. The partition to the right of dotted line is $\widetilde{\pi}$. The circled parts in rows marked $J$ are added to $\widetilde{\pi}$ to get $\pi$. The partition to the right of solid line, i.e., removing all first parts from $\pi$, is $\pi'$. The subset of $I_\pi$ of rows where $\pi'$ has the part $m$ is marked $I_{\pi'}$.  Parts marked $<$ are strictly less than $m$. One can now modify this picture and check that if $J\subseteq I_{\pi'}$, then $\widetilde{\pi}$ is not tight. All other choices of $\emptyset\neq J\subseteq I_\pi$ are allowed.}
\label{fig:tightCWproof}
\end{figure}

We now claim that $J\not\subseteq I_{\pi'}$. 
If $J\subseteq I_{\pi'}$, then, we have that:
\begin{enumerate}
\item $\widetilde{\pi}^{(i)}_1 = \pi_1^{(i)}=\max(\pi)$ for all $i\in \{1,2,\cdots,r\} \backslash J$ (this was already the case), 
\item For $i\in J$, $\widetilde{\pi}^{(i)}_1 = \pi^{(i)}_2$, but also, $\pi^{(i)}_2=(\pi')^{(i)}_1=\max(\pi)$ for all $i\in J\subseteq I_{\pi'}$,
\end{enumerate}
thus breaking the tightness of $\widetilde{\pi}$.

Thus, such a $\pi$ appears in the right side of \eqref{eqn:tightCWrec} for all $J$ such that $\emptyset\neq J\subseteq I_c$ with $J\not\subseteq I_{\pi'}$.
So, the number of times $\pi$ is counted in the alternating sum is:
\begin{align*}
\sum_{\substack{\emptyset\neq J\subseteq I_c \\ J\not\subseteq I_{\pi'}}} (-1)^{|J|-1}
=\left( \sum_{J\subseteq I_c} 
-\sum_{J\subseteq I_{\pi'}}
\right)(-1)^{|J|-1}=0-0=0.
\end{align*}
\end{proof}

In the $2$-rowed case, we record the recurrences explicitly as follows, and refer to them as the ``diamond relations''. \begin{prop}
    We have for each $\ell\geq 1$, $1\leq i\leq \ell-1$:
    \begin{align} 
        T_{(\ell-i,i)}(z,q) &+ \dfrac{zq^2}{1-zq^2}T_{(\ell-i,i)}(zq^2,q) \notag\\
        &= \dfrac{T_{(\ell-i+1,i-1)}(zq,q)}{1-zq}+\dfrac{T_{(\ell-i-1,i+1)}(zq,q)}{1-zq}
        -\dfrac{T_{(\ell-i,i)}(zq^2,q)}{1-zq^2},
        \label{eq:Rfunceqmain}
    \end{align}
    and we have:
    \begin{align} \label{eq:Rfunceqedge}
        T_{(\ell,0)}(z,q) + \dfrac{zq^2}{1-zq^2}T_{(\ell,0)}(zq^2,q) &= \dfrac{T_{(\ell-1,1)}(zq,q)}{1-zq},  \\
        T_{(0,\ell)}(z,q) + \dfrac{zq^2}{1-zq^2}T_{(0,\ell)}(zq^2,q) &= \dfrac{T_{(1,\ell-1)}(zq,q)}{1-zq}.
    \end{align}
\end{prop}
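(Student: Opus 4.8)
The plan is to derive all three relations as direct specializations of Theorem~\ref{thm:CWrec} at $r=2$, after which no independent combinatorial argument is needed. With $r=2$ the left-hand side of \eqref{eqn:tightCWrec} becomes $T_c(z,q) + \frac{zq^2}{1-zq^2}T_c(zq^2,q)$, matching verbatim the left-hand side of each displayed equation in the Proposition. Thus the entire task reduces to unwinding the signed sum $\sum_{\emptyset\neq J\subseteq I_c}(-1)^{|J|-1}T_{c(J)}(zq^{|J|},q)/(1-zq^{|J|})$ for each profile type. The work splits according to $I_c$: the interior profiles $(\ell-i,i)$ with $1\leq i\leq \ell-1$ have $I_c=\{1,2\}$ (giving \eqref{eq:Rfunceqmain}), whereas the boundary profiles $(\ell,0)$ and $(0,\ell)$ have $I_c=\{1\}$ and $I_c=\{2\}$ respectively (giving the two lines of \eqref{eq:Rfunceqedge}).

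First I would treat the interior case. Since $I_c=\{1,2\}$, the nonempty subsets are $\{1\}$, $\{2\}$, $\{1,2\}$, contributing signs $+,+,-$ via $(-1)^{|J|-1}$ and dilation arguments $zq, zq, zq^2$ via $|J|$. It then remains only to evaluate $c(J)$ from the defining rule, reading the cyclic convention $c_0=c_r=c_2$ carefully. One finds $c(\{1\})=(\ell-i-1,i+1)$, $c(\{2\})=(\ell-i+1,i-1)$, and $c(\{1,2\})=(\ell-i,i)$; substituting these produces exactly \eqref{eq:Rfunceqmain}, with the $J=\{1,2\}$ term supplying the subtracted $T_{(\ell-i,i)}(zq^2,q)/(1-zq^2)$. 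The two boundary cases are shorter still, as each $I_c$ is a singleton with a single nonempty $J$: for $(\ell,0)$ one computes $c(\{1\})=(\ell-1,1)$, and for $(0,\ell)$ one computes $c(\{2\})=(1,\ell-1)$, yielding the two edge relations at once.

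The only real obstacle is the cyclic wraparound in the definition of $c(J)$. The shift applied to the first coordinate is governed by whether $(1-1)=0\equiv r=2$ lies in $J$, and this must be decided through the identification $c_0=c_r$ rather than by treating $0$ as genuinely absent from every $J$. It is precisely this convention that makes $J=\{2\}$ increment $c_1$ (rather than fix it), and hence yields the profile $(\ell-i+1,i-1)$ rather than a spurious one. I would also verify that no decrement produces a negative entry: whenever the rule lowers $c_i$, the index $i$ lies in $J\subseteq I_c$, so $c_i>0$ by the definition of $I_c$, and all resulting profiles stay within the ranges for which the relations are stated.
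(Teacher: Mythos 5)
Your proposal is correct and matches the paper's own treatment exactly: the paper states this Proposition as nothing more than the explicit $r=2$ record of Theorem~\ref{thm:CWrec}, and your computations of $I_c$, the signs $(-1)^{|J|-1}$, the dilations $zq^{|J|}$, and the profiles $c(\{1\})=(\ell-i-1,i+1)$, $c(\{2\})=(\ell-i+1,i-1)$, $c(\{1,2\})=(\ell-i,i)$ (including the cyclic reading $c_0=c_2$, which is indeed the intended Corteel--Welsh convention) are all accurate. Nothing further is needed.
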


\subsection{Recurrences without inclusion--exclusion for \texorpdfstring{$2$}{2} rows}
We now provide a functional equation that will be used to get a connection with the Dousse--Hardiman--Konan partitions later.
\label{sec:tightrec}

\begin{prop}
Fix $\ell\in\ZZ_{\geq 1}$ and let $c_1+c_2=\ell$ ($c_1,c_2\in\ZZ_{\geq 0}$). Then:
\label{prop:2rowtightrec}
\begin{align}
	T_{(c_1,c_2)}(z,q) 
	&= 1+ \sum_{\substack{0\leq a\leq \ell,\\ a\neq c_2 }} \dfrac{zq^{|a-c_2|}}{1-zq^{|a-c_2|}}T_{(\ell-a,a)}(zq^{|a-c_2|},q).
    \label{eqn:tightrec2}
\end{align}
\end{prop}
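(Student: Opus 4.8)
The plan is to realize the right-hand side of \eqref{eqn:tightrec2} as the generating function of a decomposition of the set $\mathscr{T}_{(c_1,c_2)}$ into the empty partition (accounting for the summand $1$) together with the non-empty tight cylindric partitions, grouped by the behavior of their largest part. The starting observation, special to $r=2$, is a clean reformulation of tightness: since the defining condition asks that for every $j>0$ at least one of the two rows omits $j$, a $2$-rowed cylindric partition is tight if and only if the sets of distinct parts of $\pi^{(1)}$ and $\pi^{(2)}$ are disjoint. In particular, for a non-empty $\pi\in\mathscr{T}_{(c_1,c_2)}$ the value $\mathrm{max}(\pi)$ occurs in exactly one of the two rows. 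I would build a bijection mirroring the add-boxes construction \eqref{eqn:addboxes}, so that for each $a\neq c_2$ the summand $\frac{zq^{|a-c_2|}}{1-zq^{|a-c_2|}}T_{(\ell-a,a)}(zq^{|a-c_2|},q)$ enumerates exactly those non-empty $\pi$ obtained from a tight $\widetilde{\pi}$ of profile $(\ell-a,a)$ by prepending $|a-c_2|$ equal parts, each strictly larger than $\mathrm{max}(\widetilde{\pi})$, to a single row: row $1$ when $a>c_2$ and row $2$ when $a<c_2$. The first check is the bookkeeping, namely that summing \eqref{eqn:addboxes} with $j=|a-c_2|$ over all new maximal values, i.e. over $t\geq 1$, reproduces precisely this summand.

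The crux is the combinatorial analysis that pins the map down. All added maximal parts must go to a single row, since placing the new maximum in both rows would create a common part value and destroy tightness. Moreover, adding $j$ equal maximal parts to row $1$ (resp. row $2$) shifts the profile from $(\ell-a,a)$ to $(\ell-a+j,a-j)$ (resp. $(\ell-a-j,a+j)$), so reaching the target profile $(\ell-c_2,c_2)=(c_1,c_2)$ forces $j=a-c_2>0$ in the first case and $j=c_2-a>0$ in the second. In both situations $j=|a-c_2|$, with no residual freedom once the source profile and the target profile are fixed.

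For the forward direction I would verify validity directly from the cylindric inequalities $\pi^{(1)}_k\geq\pi^{(2)}_{k+c_2}$ and $\pi^{(2)}_k\geq\pi^{(1)}_{k+c_1}$. Prepending $j=a-c_2$ copies of a value $m'>\mathrm{max}(\widetilde{\pi})$ to $\widetilde{\pi}^{(1)}$ yields a valid cylindric partition of profile $(c_1,c_2)$: for the first $j$ positions the inequalities hold because $m'$ dominates every part of the untouched row, while for the remaining positions the shifts $k'\mapsto k'+(\ell-a)$ and $k\mapsto k+a$ governing $\widetilde{\pi}$ match those for $(c_1,c_2)$ after reindexing by $j$ (using $c_1\geq j$). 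Tightness of the result is immediate, since $m'$ exceeds every part of the row that was not modified. The reverse map takes a non-empty $\pi\in\mathscr{T}_{(c_1,c_2)}$, lets $m=\mathrm{max}(\pi)$ sit (say) in row $1$ with multiplicity $p$, and strips off all $p$ copies of $m$; the bound $\pi^{(2)}_1\geq\pi^{(1)}_{1+c_1}$ combined with $\pi^{(2)}_1<m$ forces $p\leq c_1$, so the resulting profile $(\ell-a,a)$ with $a=c_2+p$ satisfies $a\leq\ell$, while the row-$2$ case is symmetric with $p\leq c_2$ and $a=c_2-p\geq 0$. This confirms that the sources range over exactly $\{0,\dots,\ell\}\setminus\{c_2\}$ and that the two maps are mutually inverse, yielding \eqref{eqn:tightrec2}.

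The main obstacle I anticipate is precisely the verification that the multiplicity of the maximum is capped by $c_1$ (resp. $c_2$): this cap is what keeps the source profile $(\ell-a,a)$ inside the admissible range $0\leq a\leq \ell$, and it is the one step where the cylindric inequalities must be invoked carefully rather than used formally. The remaining work — matching the generating-function weights and confirming that simultaneous additions of equal parts compose the single-step profile shifts correctly — is routine once the tightness reformulation and this multiplicity bound are in place.
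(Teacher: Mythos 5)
Your proposal is correct and takes essentially the same route as the paper's proof: you classify the non-empty tight cylindric partitions by the row containing the maximum and its multiplicity $p$, bound that multiplicity by $c_1$ (resp.\ $c_2$) via the wrap-around cylindric inequality, strip the maximal parts to obtain a tight partition of profile $(\ell-a,a)$ with $a=c_2\pm p$, and recover each summand by summing the add-boxes process \eqref{eqn:addboxes} over $t\geq 1$. Your additional verifications (the disjoint-distinct-parts reformulation of tightness for two rows, and the explicit check that prepending preserves the cylindric inequalities and tightness) only spell out steps the paper leaves implicit.
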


\begin{proof}
Pick a tight cylindric partition $\pi$ with profile $(c_1,c_2)$, suppose that $\pi$ is non-empty, and let $m=\mathrm{max}(\pi)$.

For $i=1,2$, if $t_i$ is the number of parts equal to $m$ in $\pi^{(i)}$, then at least one of $t_1$ and $t_2$ must be $0$; otherwise, $m$ will appear in both rows.
We cannot have $t_2>c_2$; otherwise, the first part in $\pi^{(1)}$, which is stacked on top of the $(c_2+1)^\text{st}$ part of $\pi^{(2)}$ will be forced to be $m$, thus making $m$ appear in both $\pi^{(1)}$ and $\pi^{(2)}$.
By cylindricality, and similar reasoning, $t_1\leq c_1$. 

Removing the parts equal to $m$ leaves us with a (possibly empty) partition of profile $(c_1-t_1+t_2,  c_2-t_2+t_1)$, with largest part that is strictly less than $m$. Each of these profiles corresponds to a summand on the far right of \eqref{eqn:tightrec2} when we employ the process of \eqref{eqn:addboxes} appropriately.

\end{proof}
\section{Bivariate generating functions: \texorpdfstring{$\boldsymbol{2}$}{2}-rowed case}
\label{sec:conjclosedforms}

The aim of this section is to prove the following theorem for the bivariate generating functions of $2$-rowed tight cylindric partitions.
\begin{thm}
\label{thm:bivar2rowtight}
Fix $\ell\geq 1$. Let $0\leq b\leq \lfloor \frac{\ell}{2}\rfloor$. We have:
\begin{align*}
	T_{(\ell-b,b)}&(z,q) = T_{(b,\ell-b)}(z,q)\\
	&=\sum_{n_1,\cdots, n_\ell\geq 0}
	\dfrac{z^{N_1}q^{\frac{1}{2}(N_1^2+\cdots +N_\ell^2)\, +\, \frac{1}{2}(n_1+n_3+\cdots n_{2b-1})\, +\,\frac{1}{2}(N_{2b+1}+N_{2b+2}+\cdots+N_{\ell})}(q)_{N_1} }{(zq;q)_{N_1}(q)_{n_1}\cdots (q)_{n_\ell}}\\
    &=\sum_{N_1,\cdots, N_\ell\geq 0}
	\dfrac{z^{N_1}q^{ \sum_{i=1}^\ell {\binom{N_i+1}{2}}\, - \, \sum_{i=1}^b N_{2i}}(q)_{N_1} }{(zq;q)_{N_1}(q)_{N_1-N_2}\cdots (q)_{N_{\ell-1}-N_{\ell}}(q)_{N_\ell}}.
\end{align*}
where, as usual, $N_j=n_j+n_{j+1}+\cdots+n_{\ell}$.
\end{thm}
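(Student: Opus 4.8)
The plan is to identify the two displayed multisums with one another, to characterize $T_{(\ell-b,b)}(z,q)$ as the \emph{unique} power series in $z$ solving the diamond relations \eqref{eq:Rfunceqmain}--\eqref{eq:Rfunceqedge} with the natural initial condition, and then to verify that the proposed closed form solves that same system. First I would dispose of the routine ingredient: the two sums agree under the change of variables $n_j = N_j - N_{j+1}$ (with $N_{\ell+1}=0$), which turns $(q)_{n_j}$ into $(q)_{N_j-N_{j+1}}$ and leaves $\tfrac12(N_1^2+\cdots+N_\ell^2)=\sum_i\binom{N_i+1}{2}-\tfrac12\sum_iN_i$ common; one then checks the short identity $\tfrac12(n_1+n_3+\cdots+n_{2b-1})+\tfrac12(N_{2b+1}+\cdots+N_\ell)=\tfrac12\sum_iN_i-\sum_{i=1}^bN_{2i}$ by substituting $n_{2i-1}=N_{2i-1}-N_{2i}$. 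Writing $F_b(z,q)$ for the common sum, the initial condition is immediate, since setting $z=0$ forces $N_1=0$ and hence $N_1=\cdots=N_\ell=0$, leaving the single term $1=T_{(\ell-b,b)}(0,q)$.

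Next I would nail down the uniqueness and symmetry that make the scheme work, using only the diamond relations. Expanding each function in \eqref{eq:Rfunceqmain}--\eqref{eq:Rfunceqedge} as a series in $z$ and extracting the coefficient of $z^n$, the shifts $z\mapsto zq^2$ contribute only strictly lower $z$-degrees, while the terms $T_{(\cdots)}(zq,q)/(1-zq)$ contribute the degree-$n$ coefficient of the two neighbouring profiles. Thus, writing $\vec a^{(n)}$ for the vector of degree-$n$ coefficients over the profiles $i=0,\dots,\ell$, the level-$n$ system reads $\big((1+q^{2n})I-q^nA\big)\vec a^{(n)}=(\text{lower order})$, where $A$ is the adjacency matrix of the path on $\{0,\dots,\ell\}$. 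For $n\ge 1$ this matrix is $I+O(q^n)$, so its determinant is a unit in $\QQ[[q]]$ and $\vec a^{(n)}$ is determined by the lower levels; hence the diamond relations together with $\vec a^{(0)}=(1,\dots,1)$ have a unique solution. Because the whole system is invariant under the reflection $i\mapsto \ell-i$ (the two equations of \eqref{eq:Rfunceqedge} are interchanged and \eqref{eq:Rfunceqmain} for $i$ becomes that for $\ell-i$), the reversed solution satisfies the same system with the same initial data, so uniqueness yields $T_{(\ell-i,i)}=T_{(i,\ell-i)}$ — the first asserted equality — and tells me to compare the symmetric family $\widehat F_i:=F_{\min(i,\ell-i)}$ with $T_{(\ell-i,i)}$.

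The hard part will be to verify that $\widehat F$ satisfies \eqref{eq:Rfunceqmain}--\eqref{eq:Rfunceqedge}; equivalently, after moving the $-T_{(\ell-i,i)}(zq^2,q)/(1-zq^2)$ term left, to prove the three-term form $F_i(z)+\tfrac{1+zq^2}{1-zq^2}F_i(zq^2)=\big(F_{i-1}(zq)+F_{i+1}(zq)\big)/(1-zq)$ (with edge analogues from \eqref{eq:Rfunceqedge}). Since all $z$-dependence sits in the factor $z^{N_1}(q)_{N_1}/(zq;q)_{N_1}$, I would treat $F_b(z,q)$ as a series in $z$ with coefficients given by the inner sums over $N_2\ge\cdots\ge N_\ell$, and attack the identity by reindexing the outer variable $N_1$: the map $z\mapsto zq$ rescales $z^{N_1}$ by $q^{N_1}$ and acts on $1/(zq;q)_{N_1}$ in a controlled way, while passing from $F_b$ to $F_{b\pm 1}$ reweights the summand by the $q^{\mp N_{2b}}$-type factors coming from the correction $-\sum_{i'=1}^{b}N_{2i'}$; after a shift $N_1\mapsto N_1-1$ and an application of the $q$-Pascal and Durfee-square identities to recombine the $q$-factorials, the terms should line up.

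I expect this last matching to be the genuine obstacle. The quadratic exponent $\sum_i\binom{N_i+1}{2}$ picks up a full linear form under $N_1\mapsto N_1-1$, and the correction term makes the behaviour of $F_i$ depend on the parity of the shifted index, so the bookkeeping is delicate; moreover the relations near $i=\lfloor\ell/2\rfloor$ must be checked using the fold $\widehat F_{i+1}=F_{\ell-i-1}$ rather than a literal sum (indeed the literal formula is not even defined once $2i>\ell$). Once the three-term system and its edge cases are confirmed for $\widehat F$, the uniqueness from the second step gives $\widehat F_i=T_{(\ell-i,i)}$ for all $i$, and in particular $T_{(\ell-b,b)}(z,q)=F_b(z,q)$ for $0\le b\le\lfloor\ell/2\rfloor$, which together with the symmetry completes the proof.
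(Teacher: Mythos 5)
Your scaffolding is sound and matches the paper's framework. The two multisums are indeed equal under $n_j=N_j-N_{j+1}$ (your exponent check is correct); the paper likewise reduces the theorem to showing that the folded family $R_i(z,q)=S(0;-\eta_i;z,q)$, $0\le i\le\lfloor\ell/2\rfloor$, satisfies the diamond relations with $R_i(0,q)=1$; and your coefficient-of-$z^n$ uniqueness argument (the level-$n$ matrix being $I+O(q^n)$ for $n\ge1$, hence invertible over $\QQ[[q]]$ — note only that the edge rows coming from \eqref{eq:Rfunceqedge} read $a_{0,n}-q^na_{1,n}$ rather than $(1+q^{2n})a_{0,n}-q^na_{1,n}$, which does not affect the conclusion) makes explicit what the paper leaves implicit. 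Your derivation of $T_{(\ell-i,i)}=T_{(i,\ell-i)}$ from the $i\mapsto\ell-i$ invariance of the system is a legitimate substitute for the paper's direct appeal to symmetry (swapping the two rows of a cylindric partition swaps the profile entries).

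However, the step you defer — verifying that the multisum actually satisfies the folded three-term relations — is the entire content of the paper's proof, and the mechanism you sketch for it would fail. Manipulating only the outer variable $N_1$ (shift $N_1\mapsto N_1-1$, $q$-Pascal, Durfee square) cannot relate $F_{i-1}$, $F_i$, $F_{i+1}$: passing from $\eta_i$ to $\eta_{i\pm1}$ alters the linear form in the exponent at positions $2i$ and $2i+2$, deep inside the nested sum, and no bookkeeping at the outer index transports that change. The paper's device is genuinely multivariate: for every index $j$ it derives the contiguous relation $\rel_j(t;\vv)$ by multiplying the summand of $S(t;\vv)$ by $1-q^{n_j}$ and shifting $n_j\mapsto n_j+1$, together with a relation $\rel_0(t;\vv)$ in the parameter $t$; it then proves the key four-term identity
\begin{equation*}
S(1;\delta_1-\eta_{i-1})-S(1;2\delta_1-\eta_i)-S(1;e_1+e_2-\eta_i)+S(1;\delta_1-\eta_{i+1})=0
\end{equation*}
(and its folded analogue \eqref{eqn:Sfourtermevenmiddle} when $\ell$ is even) by telescoping the sum $\sum_{j=1}^{2i-1}\bigl(\rel_j(1;\delta_1-\eta_i+\delta_{j+1})-\rel_{j+2}(1;\delta_1-\eta_i-\delta_{j+2})\bigr)$, which exploits precisely the staircase property $v_j=v_{j+2}+1$ of the vector $\delta_1-\eta_i$; finally each diamond relation is obtained from an explicit five-term combination such as $\rel_0(0;-\eta_i)-\rel_0(1;2\delta_1-\eta_i)-zq\,\rel_0(1;\delta_1-\eta_i)+\rel_1(1;-\eta_i)+\rel_2(1;e_1-\eta_i)$, after subtracting the four-term identity. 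Until you supply relations of this kind that propagate through the interior indices $n_2,\dots,n_\ell$ (or some equivalent device), your argument is a correct reduction plus a statement of where the difficulty lies, but not a proof.
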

With $z\mapsto 1$ and $\ell$ odd, this corresponds to the sum in \cite[Thm.\ 1.4]{KimYee-parity}, and for  $z\mapsto 1$ and $\ell$ even, this corresponds to \cite[Eq.\ (1.9)]{And-parity}. (Note that dilating $q^2\mapsto q$ in their sums is necessary to match ours, as noted by~\cite{CheEtAl}.) Kur\c{s}ung\"oz also considered these sums~\cite{Kur-parity}. Finally, see Corollary 1.5(a) of Stembridge~\cite{Stem} for the case with $z \mapsto 1$ and $b=0$.

\subsection{Preliminaries}
Fix $\ell\ge1$. First, define for a vector $\vv=(v_1,\cdots,v_\ell)$
\begin{align}
S(t;\vv;z,q) &= \sum_{n_1,\dots,n_\ell \ge 0} \frac{z^{N_1} q^{\left(N_1^2 + N_2^2 + \cdots + N_\ell^2 + N_1 + N_2 + \cdots + N_\ell\right)/2 +\vv\cdot\vn}(q;q)_{N_1}}{(zq;q)_{N_1+t}(q;q)_{n_1}(q;q)_{n_2} \cdots (q;q)_{n_\ell}}.
\end{align}
We will frequently suppress $z$ and $q$ and simply write  $S(t;\vv)$.

We let $e_j$ be the standard basis vector with a $1$ in the $j^{\text{th}}$ position and $0$s everywhere else. 
We shall use the following vectors:
\begin{align*}
\delta_j&=e_j+e_{j+1}+\cdots+e_{\ell},\\
\Delta_i &= \delta_1 + \delta_2+\cdots+\delta_{i},\\
\eta_i &= \delta_2 + \delta_4+\cdots+\delta_{2i}.
\end{align*}
If $j>\ell$, then $e_j=\delta_j=0$.
Note that $e_j=\delta_j-\delta_{j+1}$.

By symmetry, $T_{(\ell-b,b)}(z,q) = T_{(b,\ell-b)}(z,q)$. Thus, it suffices to prove Theorem~\ref{thm:bivar2rowtight} by showing that the appropriate set of multisums satisfy half of the ``diamond relations''. Writing $R_{b}(z,q)=T_{(\ell-b,b)}(z,q) = T_{(b,\ell-b)}(z,q)$ for $0 \le b \le \lfloor \ell/2\rfloor$, we restate Theorem~\ref{thm:bivar2rowtight} (suppressing the $q$ argument of $R_i$) as:
\begin{thm} \label{thm:maindiamond}
A solution to
\begin{align}
R_0(z) + \dfrac{zq^2}{1-zq^2}R_{0}(zq^2) &= \dfrac{R_{1}(zq)}{1-zq} \\
R_i(z) + \dfrac{zq^2}{1-zq^2}R_{i}(zq^2) &= \dfrac{R_{i-1}(zq)}{1-zq}+\dfrac{R_{i+1}(zq)}{1-zq}
 -\dfrac{R_i(zq^2)}{1-zq^2}, \quad 1\leq i < \frac{\ell-1}2 \\
R_{\ell/2}(z) + \dfrac{zq^2}{1-zq^2}R_{\ell/2}(zq^2) &= \dfrac{2R_{\ell/2-1}(zq)}{1-zq} -\dfrac{R_{\ell/2}(zq^2)}{1-zq^2}, \quad \text{if $\ell$ is even,}
\end{align}
\begin{align}
R_{(\ell-1)/2}(z) &+ \dfrac{zq^2}{1-zq^2}R_{(\ell-1)/2}(zq^2) \notag\\
&=  \dfrac{R_{(\ell-3)/2}(zq)}{1-zq}+\dfrac{R_{(\ell-1)/2}(zq)}{1-zq}-\dfrac{R_{(\ell-1)/2}(zq^2)}{1-zq^2}, \quad \text{if $\ell$ is odd},
\end{align}
with initial conditions $R_{i}(0,q)=R_i(z,0)=1$, is given by
\begin{align} \label{eqn:Rizdefn}
    R_i(z,q) = S(0;-\eta_i;z,q).
\end{align}
 for $0\leq i\leq \left\lfloor{\ell}/{2}\right\rfloor$.
\end{thm}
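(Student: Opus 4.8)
\emph{Strategy.} The plan is to verify that the multisums $S(0;-\eta_i;z,q)$ satisfy the four displayed functional equations together with the initial conditions; uniqueness of the solution then forces $S(0;-\eta_i)=T_{(\ell-b,b)}$, since $T_{(\ell-b,b)}$ obeys the same system (the diamond relations established above) and the same initial conditions, thereby proving Theorem~\ref{thm:bivar2rowtight}. Uniqueness itself is routine: expanding in powers of $z$ and reading off coefficients, each equation expresses the order-$z^n$ data in terms of strictly lower-valuation data (the substitutions $z\mapsto zq,zq^2$ and the prefactors $zq^j/(1-zq^j)$ only raise the $q$-valuation), so the system together with $R_i(0,q)=1$ determines all $R_i$ recursively. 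The initial conditions for $S(0;-\eta_i)$ are immediate: putting $z=0$ annihilates every term with $N_1>0$, and putting $q=0$ leaves only $\vn=\vz$ because $\sum_j\binom{N_j+1}{2}-\sum_{k=1}^i N_{2k}\ge 0$ with equality only at $\vz$ (each subtracted $N_{2k}$ satisfies $N_{2k}\le\binom{N_{2k}+1}{2}$, and a nonzero $N_{2k}$ forces $N_1>0$, whose triangular term is then unmatched). In both cases the sum collapses to $1$.

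\emph{Engine.} Everything rests on two one-line contiguous relations for the $S(t;\vv)$. Dilating $z\mapsto zq$ sends $z^{N_1}\mapsto z^{N_1}q^{\delta_1\cdot\vn}$ and, via $(1-zq)(zq^2;q)_{N_1+t}=(zq;q)_{N_1+t+1}$, gives
\begin{align}
S(t;\vv;zq,q)=(1-zq)\,S(t+1;\vv+\delta_1;z,q),
\end{align}
whose iterate is $S(t;\vv;zq^2,q)=(1-zq)(1-zq^2)\,S(t+2;\vv+2\delta_1;z,q)$. Telescoping the $z$-factor in the index $t$, using $\tfrac{1}{(zq;q)_{m}}-\tfrac{1}{(zq;q)_{m+1}}=\tfrac{-zq^{m+1}}{(zq;q)_{m+1}}$ with $m=N_1+t$, gives
\begin{align}
S(t;\vv)=S(t+1;\vv)-zq^{t+1}S(t+1;\vv+\delta_1).
\end{align}

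\emph{Reduction.} I would substitute these into each functional equation to normalize every term to the common level $t=2$ and common argument $z$. For the interior equation the left side $R_i(z)+\tfrac{zq^2}{1-zq^2}R_i(zq^2)$ becomes $S(2;-\eta_i)-(zq+zq^2)S(2;-\eta_i+\delta_1)+zq^2S(2;-\eta_i+2\delta_1)$, and the right side becomes the corresponding expression with $-\eta_i$ replaced by $-\eta_{i\pm1}$ in the relevant slots. Invoking $-\eta_{i-1}=-\eta_i+\delta_{2i}$ and $-\eta_{i+1}=-\eta_i-\delta_{2i+2}$, the whole equation collapses to a finite \emph{horizontal} identity among the values $S\bigl(2;-\eta_i+a\delta_1+\varepsilon\delta_{2i}+\varepsilon'\delta_{2i+2}\bigr)$. (I checked the $i=0$ instance through order $z^1$, and the $\ell=2$ top instance through order $z^1$, to confirm the bookkeeping.)

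\emph{Crux and boundary cases.} The remaining task is to prove these horizontal identities, and this is where the even-index structure of $\eta_i$ must be exploited: passing from $-\eta_{i-1}$ to $-\eta_i$ replaces $q^{\binom{N_{2i}+1}{2}}$ by $q^{\binom{N_{2i}}{2}}$, i.e.\ inserts the single factor $q^{-N_{2i}}$, so the needed relation is a telescoping/shift localized in the block around $n_{2i}$ (alternatively, a shift $n_1\mapsto n_1+1$ tracking $N_1\mapsto N_1+1$ through $z^{N_1}$, $(q;q)_{N_1}$ and $(zq;q)_{N_1+t}$, and using $\Delta_{2i}=\delta_1+\cdots+\delta_{2i}$). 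Organizing this cascade into the exact horizontal identity is the main obstacle; the rest is routine. The remaining equations are degenerations of the same computation. The edge equation $i=0$ simply lacks the $\delta_{2i}$ term. For $\ell$ odd and $i=(\ell-1)/2$ one has $\delta_{2i+2}=\delta_{\ell+1}=0$, whence $-\eta_{i+1}=-\eta_i$ and the interior reduction reproduces verbatim the stated diagonal $R_{(\ell-1)/2}(zq)$ term. For $\ell$ even and $i=\ell/2$ the equation carries the combinatorial factor $2$ in front of $R_{\ell/2-1}(zq)$, coming from the coincidence of the two neighboring profiles of $(\ell/2,\ell/2)$ under $T_{(\ell-b,b)}=T_{(b,\ell-b)}$; I would verify this as a separate horizontal identity of the same shape (the $\ell=2$ check above is exactly this case).
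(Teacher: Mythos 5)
Your scaffolding coincides with the paper's: the initial-condition argument is sound (at $q=0$ the exponent is $\sum_j\binom{N_j+1}{2}-\sum_{k\le i}N_{2k}\ge\binom{N_1+1}{2}\ge 1$ unless $\vn=\vz$), your first engine relation is the paper's observation $S(0;\vv;zq)/(1-zq)=S(1;\vv+\delta_1;z)$, your second is exactly the paper's relation $\rel_0(t;\vv)$, and your level-$2$ normalization of the left-hand side, $S(2;-\eta_i)-(zq+zq^2)S(2;-\eta_i+\delta_1)+zq^2S(2;-\eta_i+2\delta_1)$, is correct. But the proposal stops precisely where the proof's content begins. Both of your engine relations only shift $t$ and insert copies of $\delta_1$; they leave the $\eta$-part of the parameter vector untouched, so they provide \emph{no} mechanism relating $S(\cdot\,;-\eta_{i-1})$, $S(\cdot\,;-\eta_i)$ and $S(\cdot\,;-\eta_{i+1})$. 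The ``horizontal identity'' you reduce to is therefore just a restatement of what must be proved, and you concede it is unproved (``organizing this cascade into the exact horizontal identity is the main obstacle''); checks through order $z^1$ are consistency evidence, not proof. The missing idea is the second family of contiguous relations, $\rel_j(t;\vv)$ for $1\le j\le\ell$, obtained by multiplying the summand of $S(t;\vv)$ by $(1-q^{n_j})$ and shifting $n_j\mapsto n_j+1$:
\begin{align}
S(t;\vv)-S(t;\vv+e_j)-zq^{v_j+j}\,S(t+1;\vv+\Delta_j)+zq^{v_j+j+1}\,S(t+1;\vv+\delta_1+\Delta_j)=0.
\end{align}

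These relations are what let the paper cross between the $\eta_i$'s, and the mechanism is not the one you guess. The key lemma (the paper's \eqref{eqn:Sfourterm}, \eqref{eqn:Sfourtermevenmiddle}) is proved by noting that whenever $v_j=v_{j+2}+1$, the combination $\rel_j(1;\vv+\delta_{j+1})-\rel_{j+2}(1;\vv-\delta_{j+2})$ has its $S(2;\cdot)$ terms cancel identically, leaving the four-term relation $S(1;\vv+\delta_{j+1})-S(1;\vv+\delta_j)-S(1;\vv-\delta_{j+2})+S(1;\vv-\delta_{j+3})=0$; since $\vv=\delta_1-\eta_i$ satisfies $v_j=v_{j+2}+1$ for \emph{all} $1\le j\le 2i-1$, summing over this whole range telescopes to \eqref{eqn:Sfourterm}. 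So the cancellation is a global telescope across the indices $1,\dots,2i-1$, not a shift ``localized in the block around $n_{2i}$.'' The diamond relations are then completed not by normalizing everything to $t=2$ but via mixed-level combinations such as $\rel_0(0;-\eta_i)-\rel_0(1;2\delta_1-\eta_i)-zq\,\rel_0(1;\delta_1-\eta_i)+\rel_1(1;-\eta_i)+\rel_2(1;e_1-\eta_i)$, followed by subtraction of \eqref{eqn:Sfourterm}. Finally, the even-$\ell$ middle case is \emph{not} ``a separate horizontal identity of the same shape'': its proof requires extending the telescope to $j=\ell$ with extra boundary terms $\rel_{\ell-1}$, $\rel_\ell$ and the degenerations $\delta_{\ell+1}=\vz$, $e_\ell=\delta_\ell$, which is where the factor $2$ actually comes from. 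In short: correct frame and correct reductions, but the central lemma --- the only genuinely nontrivial step --- is absent, so the proof is incomplete.
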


We have the following relations for $1\leq j\leq \ell$:
\begin{align}
\rel_j(t;\vv) &: S(t;\vv) - S(t;\vv+e_j) - zq^{v_j+j}S(t+1;\vv+ \Delta_j)\notag\\
&\qquad+zq^{v_j+j+1}S(t+1;\vv+\delta_1+ \Delta_j)=0.
\end{align}
This can be obtained routinely by multiplying each summand of $S(t;\vv)$ by $\left(1-q^{n_j}\right)$, noting that summands corresponding to $n_j=0$ vanish, and shifting the summation index $n_j\mapsto n_j+1$. 
In addition, we have:
\begin{align}
\rel_0(t;\vv): S(t;\vv)-S(t+1;\vv)+zq^{t+1}S(t+1;\vv+\delta_1)=0;
\end{align}
obtained easily by subtracting the corresponding summands of $S(t;\vv)$ and $S(t+1;\vv)$.

\subsection{Proof of Theorem~\ref{thm:maindiamond}}
We begin with the following proposition:
\begin{prop}
For $1 \le i <\ell/2,$ we have:
\begin{align} \label{eqn:Sfourterm}
S(1;\delta_1 -\eta_{i-1})- S(1;2\delta_1-\eta_i)-S(1;e_1+e_2-\eta_i)+S(1;\delta_1-\eta_{i+1}) = 0.
\end{align}
Furthermore, if $\ell$ is even, then
\begin{equation}  \label{eqn:Sfourtermevenmiddle}
2S(1;\delta_1 -\eta_{{\ell/2}-1})- S(1;2\delta_1-\eta_{\ell/2})-S(1;e_1+e_2-\eta_{\ell/2})= 0.
\end{equation}
\end{prop}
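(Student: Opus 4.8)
The plan is to establish both displayed identities purely from the elementary relations $\rel_j$ and $\rel_0$ stated just above, treating the multisums $S(t;\vv)$ as formal objects and manipulating their vector arguments. The key observation is that the vectors $\eta_i$ satisfy $\eta_i = \eta_{i-1} + \delta_{2i}$, so that the four arguments appearing in \eqref{eqn:Sfourterm} differ from one another by controlled combinations of the $\delta_j$. My first move is to pin down how each relation $\rel_j(t;\vv)$ transforms an argument: applying $\rel_j$ replaces $S(t;\vv)$ by $S(t;\vv+e_j)$ plus terms at level $t+1$ with arguments shifted by $\Delta_j$ and $\delta_1+\Delta_j$. Since $e_j = \delta_j - \delta_{j+1}$ and $\Delta_j = \delta_1 + \cdots + \delta_j$, there is a clean dictionary between the $e_j$-shifts that $\rel_j$ produces and the $\delta$-shifts that distinguish the $\eta_i$.

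First I would expand each of the four terms in \eqref{eqn:Sfourterm} down to a common level, most naturally level $t=1$ already being fixed, and try to exhibit the left-hand side as a telescoping sum of instances of $\rel_j$ for suitable $j$ and suitable argument vectors. Concretely, I expect to use $\rel_{2i}$ (and possibly $\rel_{2i-1}$) to convert $S(1;\delta_1-\eta_{i-1})$ into a combination involving $\eta_i = \eta_{i-1}+\delta_{2i}$, thereby producing the arguments $2\delta_1-\eta_i$ and $\delta_1-\eta_{i+1}$ on the nose, while the middle term $S(1;e_1+e_2-\eta_i)$ accounts for an $e_1+e_2 = \delta_1-\delta_3 + \delta_2-\delta_3$ correction. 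The even-level case \eqref{eqn:Sfourtermevenmiddle} should follow by the same mechanism specialized to $i=\ell/2$: when $2i=\ell$ one has $\delta_{2i+1}=\delta_{\ell+1}=0$, so the term $S(1;\delta_1-\eta_{i+1})$ that would appear degenerates and merges with $S(1;\delta_1-\eta_{i-1})$, explaining the coefficient $2$ and the absence of a fourth summand.

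The main obstacle I anticipate is bookkeeping rather than conceptual: I must verify that the $t+1$-level terms generated by the various $\rel_j$ applications cancel \emph{exactly} in pairs, including the extra $\delta_1$-shifts carried by the $zq^{v_j+j+1}$ summands in $\rel_j$. Getting the powers of $q$ to match is the delicate part, because the exponent $v_j+j$ depends on the component $v_j$ of the current argument vector, and this component changes as I move between the four terms; I would track these exponents carefully using $v_j = \vv\cdot e_j$ together with the explicit forms of $\eta_i$ and $\delta_1$. I would also double-check the boundary behavior — that applying $\rel_{2i}$ is legitimate, i.e.\ $1\le 2i\le\ell$, which holds precisely under the hypothesis $1\le i<\ell/2$ (and $i=\ell/2$ in the even case) — and confirm that no term at level $t=2$ survives, so that the claimed vanishing is genuinely an identity among level-$1$ sums. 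Once the cancellation pattern is verified, both \eqref{eqn:Sfourterm} and \eqref{eqn:Sfourtermevenmiddle} drop out as finite linear combinations of the $\rel_j=0$ relations.
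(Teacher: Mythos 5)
Your overall strategy coincides with the paper's: both identities are to be obtained as finite linear combinations of the relations $\rel_j$ in which all level-$2$ terms cancel in pairs, leaving a telescoping identity among level-$1$ sums. But your plan omits the one structural device that makes the cancellation happen, and your account of the even case is wrong in a way that would derail the proof. A single application of $\rel_{2i}$ (or $\rel_{2i-1}$) to $S(1;\delta_1-\eta_{i-1})$ produces level-$2$ terms with no partners to cancel against; no choice of one or two relations works. The paper's device is to pair $\rel_j$ with $\rel_{j+2}$ at shifted arguments: whenever the base vector $\vv$ satisfies $v_j=v_{j+2}+1$, the combination $\rel_j(1;\vv+\delta_{j+1})-\rel_{j+2}(1;\vv-\delta_{j+2})$ has identical level-$2$ terms --- the exponents $v_j+j$ and $(v_{j+2}-1)+(j+2)$ agree, and $\delta_{j+1}+\Delta_j=\Delta_{j+1}=\Delta_{j+2}-\delta_{j+2}$ --- so it collapses to the pure level-$1$ relation
\begin{equation}
S(1;\vv+\delta_{j+1})-S(1;\vv+\delta_{j})-S(1;\vv-\delta_{j+2})+S(1;\vv-\delta_{j+3})=0.
\end{equation}
Since $\vv=\delta_1-\eta_i$ satisfies $v_j=v_{j+2}+1$ for $1\le j\le 2i-1$, summing these over $j=1,\dots,2i-1$ telescopes exactly to \eqref{eqn:Sfourterm}. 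So the issue you defer as ``bookkeeping'' is in fact the substance of the proof, and the plan as stated does not discharge it.

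The even case is where your proposed mechanism actually fails. At $i=\ell/2$ one has $\eta_{\ell/2+1}=\eta_{\ell/2}$ (because $\delta_{\ell+2}=\vz$), so the degenerate fourth term would be $S(1;\delta_1-\eta_{\ell/2})$; it does \emph{not} merge with $S(1;\delta_1-\eta_{\ell/2-1})$, whose argument differs by $\delta_\ell\neq\vz$, and the two series are genuinely distinct (each summand's weight changes by a factor $q^{n_\ell}$). Following your plan would therefore yield an identity inconsistent with \eqref{eqn:Sfourtermevenmiddle}: subtracting the two would force the false equality $S(1;\delta_1-\eta_{\ell/2})=S(1;\delta_1-\eta_{\ell/2-1})$. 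The true obstruction at $i=\ell/2$ is that the telescoping cannot be run to $j=2i-1=\ell-1$, since the required partner $\rel_{j+2}=\rel_{\ell+1}$ does not exist; legitimacy of $\rel_{2i}$ alone, which you do check, is not the relevant boundary condition. The paper instead truncates the telescoping at $j=\ell-2$ and appends the boundary combination $\rel_{\ell-1}(1;\delta_1-\eta_{\ell/2}+\delta_\ell)-\rel_\ell(1;\delta_1-\eta_{\ell/2})$, whose level-$2$ terms again cancel against each other; the coefficient $2$ in \eqref{eqn:Sfourtermevenmiddle} then arises because $e_\ell=\delta_\ell$ makes two distinct surviving terms equal to $S(1;\delta_1-\eta_{\ell/2-1})$. (A small slip as well: $e_1+e_2=\delta_1-\delta_3$, not $\delta_1-\delta_3+\delta_2-\delta_3$.)
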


\begin{proof}
First, let  $1 \le j \le \ell-2$, and suppose that $\vv$ is a vector such that $\vv_{j}=\vv_{j+2}+1$. Then,
\begin{align}
&\rel_j\left(1;\vv+\delta_{j+1}\right)-\rel_{j+2}\left(1;\vv-\delta_{j+2}\right) \\
&=S(1;\vv+\delta_{j+1}) - S(1;\vv+\delta_{j+1}+e_j) - zq^{v_j+j}S(2;\vv+\delta_{j+1}+ \Delta_j)\\
&\qquad +zq^{v_j+j+1}S(2;\vv+\delta_{j+1}+\delta_1+ \Delta_j) \\
&\,\,-\left(S(1;\vv-\delta_{j+2}) - S(1;\vv-\delta_{j+2}+e_{j+2}) - zq^{v_{j+2}-1+j+2}S(2;\vv-\delta_{j+2}+ \Delta_{j+2})\right.\\
&\qquad \left.+zq^{v_{j+2}+j+3}S(2;\vv-\delta_{j+2}+\delta_1+ \Delta_{j+2})\right) \\
&=S(1;\vv+\delta_{j+1}) - S(1;\vv+\delta_{j}) - zq^{v_j+j}S(2;\vv+ \Delta_{j+1})\\
&\qquad+zq^{v_j+j+1}S(2;\vv+\delta_1+ \Delta_{j+1}) \\
&\,\,-S(1;\vv-\delta_{j+2}) + S(1;\vv-\delta_{j+3}) +zq^{v_{j}+j}S(2;\vv+\Delta_{j+1})\\
&\qquad-zq^{v_{j}+j+1}S(2;\vv+\delta_1+ \Delta_{j+1}) \\
&=S(1;\vv+\delta_{j+1}) - S(1;\vv+\delta_{j}) -S(1;\vv-\delta_{j+2}) + S(1;\vv-\delta_{j+3}) =0. \label{eq:newfour}
\end{align}

Now, let $1 \le i <\ell/2$. 
Noting that $\delta_1-\eta_i$ has  the property that  $\vv_{j}=\vv_{j+2}+1$ for $1\le j\le 2i-1$, 
consider $\ds\sum_{j=1}^{2i-1} \left(\rel_j\left(1;\delta_1-\eta_i+\delta_{j+1}\right)-\rel_{j+2}\left(1;\delta_1-\eta_i-\delta_{j+2}\right)\right)$. Using~\eqref{eq:newfour}, this expands as
\begin{align*}
 \sum_{j=1}^{2i-1} &\left( S(1;\delta_1-\eta_i+\delta_{j+1}) - S(1;\delta_1-\eta_i+\delta_{j}) - S(1;\delta_1-\eta_i-\delta_{j+2})
 \right. \notag \\
  &\left.+ S(1;\delta_1-\eta_i-\delta_{j+3})\right).
\end{align*}
The first and second terms of the series above telescope, as do the third and fourth, thus providing us with
\begin{align*}
&S(1;\delta_1-\eta_i+\delta_{2i}) - S(1;\delta_1-\eta_i+\delta_{1}) - S(1;\delta_1-\eta_i-\delta_{3}) + S(1;\delta_1-\eta_i-\delta_{2i+2}) \\
&=S(1;\delta_1-\eta_{i-1}) - S(1;2\delta_1-\eta_i) - S(1;e_1+e_2-\eta_i) + S(1;\delta_1-\eta_{i+1}) = 0,
\end{align*}
proving~\eqref{eqn:Sfourterm}. To prove~\eqref{eqn:Sfourtermevenmiddle}, instead consider (assuming $\ell$ is even)
\begin{align*} \sum_{j=1}^{\ell-2} &\left(\rel_j\left(1;\delta_1-\eta_{\ell/2}+\delta_{j+1}\right)-\rel_{j+2}\left(1;\delta_1-\eta_{\ell/2}-\delta_{j+2}\right)\right) \\
&\quad +\rel_{\ell-1}\left(1;\delta_1-\eta_{\ell/2}+\delta_{\ell}\right)-\rel_{\ell}\left(1;\delta_1-\eta_{\ell/2}\right).
\end{align*}
This expands as
\begin{align*}
 &\sum_{j=1}^{\ell-2} \! \left( S(1;\delta_1 \!- \!\eta_{\frac{\ell}2} \!+ \!\delta_{j+1}) \! - \! S(1;\delta_1 \!- \!\eta_{\frac{\ell}2} \!+ \!\delta_{j}) \! - \! S(1;\delta_1 \!- \!\eta_{\frac{\ell}2} \!- \!\delta_{j+2})  \!+ \! S(1;\delta_1 \!- \!\eta_{\frac{\ell}2} \!- \!\delta_{j+3})\right) \\
 & +\! S(1;\delta_1\!-\!\eta_{\frac\ell 2}\!+\!\delta_{\ell})\! -\! S(1;\delta_1\!-\!\eta_{\frac\ell 2}\!+\!\delta_{\ell}\!+\!e_{\ell-1})\!
 -\!zq^{1-(\ell/2-1)+\ell-1}S(2;\delta_1\!-\!\eta_{\frac\ell 2}\!+\!\delta_{\ell}\!+\!\Delta_{\ell-1}) \\
 &+zq^{1-(\ell/2-1)+\ell-1+1}S(2;2\delta_1-\eta_{\frac\ell 2}+\delta_{\ell}+\Delta_{\ell-1}) \\
&-\left(S(1;\delta_1-\eta_{\ell/2}) - S(1;\delta_1-\eta_{\ell/2}+e_{\ell})
-zq^{1-\ell/2+\ell}S(2;\delta_1-\eta_{\ell/2}+\Delta_{\ell})\right. \\
&\quad\quad \left.
+zq^{1-\ell/2+\ell+1}
S(2;2\delta_1-\eta_{\ell/2}+\Delta_{\ell})\right) \\
&= S(1;\delta_1-\eta_{\frac{\ell}2}+\delta_{\ell-1}) - S(1;\delta_1-\eta_{\frac{\ell}2}+\delta_{1}) -S(1;\delta_1-\eta_{\frac{\ell}2}-\delta_{3})+ S(1;\delta_1-\eta_{\frac{\ell}2}-\delta_{\ell+1}) \\
 & \quad \quad +
 S(1;\delta_1-\eta_{\frac{\ell}2}+\delta_{\ell}) - S(1;\delta_1-\eta_{\frac{\ell}2}+\delta_{\ell-1})-S(1;\delta_1-\eta_{\frac{\ell}2}) + S(1;\delta_1-\eta_{\frac{\ell}2}+e_{\ell}) \\
&=  - S(1;2\delta_1-\eta_{\ell/2}) -S(1;\delta_1-\eta_{\ell/2}-\delta_{3}) + 2S(1;\delta_1-\eta_{\ell/2-1}) \\ &=0.
\end{align*} Above, we had to recall that $\delta_{\ell+1}=\vz$ and that $e_\ell=\delta_\ell$. The last equality is equivalent to ~\eqref{eqn:Sfourtermevenmiddle}.
\end{proof}
We are now ready to complete the proof of Theorem~\ref{thm:maindiamond}.

\begin{proof}
First, it is easy to see that $S(0;-\eta_i;0,q)=S(0;-\eta_i;z,0)=1$ for $0\leq i\leq \left\lfloor\frac{\ell}{2}\right\rfloor$.

Now, observe that $\ds \frac{S(0;\vv;zq)}{1-zq}=S(1;\vv+\delta_1;z)$ and 
$\ds \frac{S\left(0;\vv;zq^2\right)}{(1-zq)\left(1-zq^2\right)}=S(2;\vv+2\delta_1;z)$. Substituting~\eqref{eqn:Rizdefn} into the ``diamond relations'' produces:
\begin{align} \label{eqn:edge}
 S(0;\vz)+ zq^2(1\!-\!zq)S(2;2\delta_1) &= S(1;e_1) \\ \notag
S(0;-\eta_i) + (1\!-\!zq)(1\!+\!zq^2)S(2; 2\delta_1 -\eta_i) &= S(1;\delta_1 -\eta_{i-1})+S(1;\delta_1-\eta_{i+1}), \\ & 1\leq i < \frac{\ell-1}2 \label{eqn:awayfrommiddle} \\
\notag
S(0;-\eta_{\frac{\ell-1}2})+ (1\!-\!zq)(1\!+\!zq^2) S(2; 2\delta_1 -\eta_{\frac{\ell-1}2}) &=  S(1;\delta_1-\eta_{\frac{\ell-3}2})+S(1;\delta_1-\eta_{\frac{\ell-1}2}), \\ & \text{if $\ell$ is odd}  \label{eqn:middleodd}\\
S(0;-\eta_{\ell/2}) + (1\!-\!zq)(1\!+\!zq^2) S(2; 2\delta_1 -\eta_{\ell/2}) &=2S(1;\delta_1-\eta_{\ell/2-1}), \notag \\
&\text{if $\ell$ is even} \label{eqn:middleeven} 
\end{align}
    
Consider the linear combination of relations given by
$\rel_0(0;\vz) +\rel_1(1;\vz)-zq\,\rel_0(1;\delta_1)$.
Expanding this out gives:
\begin{align*}
&S(0;\vz)-S(1;\vz)+zq\,S(1;\delta_1) \\
&+S(1;\vz) - S(1;e_1) - zq\,S(2;\delta_1)+zq^{2}S(2;2\delta_1) \\
&-zq\left(S(1;\delta_1)-S(2;\delta_1)+zq^2\,S(2;2\delta_1)\right) \\
&=  S(0;\vz)- S(1;e_1) +zq^{2}S(2;2\delta_1)  -z^2q^3\,S(2;2\delta_1) &=0.
\end{align*}
Thus, we have verified that~\eqref{eqn:edge} is true.

Now, for $1\leq i < {\ell}/{2}$, consider the linear combination of relations given by 
\begin{equation*}
\rel_0(0;-\eta_i)
-\rel_0(1;2\delta_1-\eta_i)
-zq\, \rel_0(1;\delta_1-\eta_i)
+\rel_1(1;-\eta_i)
+\rel_2(1;e_1-\eta_i).
\end{equation*}
Expanding this out gives:
\begin{align*}
&S(0;-\eta_i)-\underline{S(1;-\eta_i)}
+\underline{zqS(1;-\eta_i+\delta_1)}
\\
&-S(1;2\delta_1-\eta_i)+S(2;2\delta_1-\eta_i)-\underline{zq^2S(2;3\delta_1-\eta_i)}
\\
& -\underline{zqS(1;\delta_1-\eta_i)}
+\underline{zqS(2;\delta_1-\eta_i)}-z^2q^3S(2;2\delta_1-\eta_i)
\\
&
+\underline{S(1;-\eta_i)}-\underline{S(1;-\eta_i+e_1)}-\underline{zqS(2;-\eta_i+\delta_1)}
+zq^2S(2;2\delta_1-\eta_i)
\\
&+\underline{S(1;e_1-\eta_i)}-S(1;e_1+e_2-\eta_i)-zqS(2;e_1-\eta_i+\delta_1+\delta_2) \\ & \quad + \underline{zq^2S(2;e_1-\eta_i+2\delta_1+\delta_2)} = 0.
\end{align*}
We have underlined the terms that cancel (recall that $e_1=\delta_1-\delta_2$). Considering the uncancelled terms and factoring the coefficient of $S(2;2\delta_1-\eta_i)$ allows us to conclude that
\begin{align*}
S(0;-\eta_i) +(1-zq)(1+zq^2)S(2;2\delta_1\!-\eta_i)- S(1;2\delta_1\!-\eta_i)-S(1;e_1+e_2-\eta_i) = 0.
\end{align*}
If we subtract~\eqref{eqn:Sfourterm} from this, we obtain
\begin{align*}
S(0;-\eta_i) +(1-zq)(1+zq^2)S(2;2\delta_1-\eta_i)
-S(1;\delta_1 -\eta_{i-1})
-S(1;\delta_1-\eta_{i+1})
= 0
\end{align*}
which is exactly~\eqref{eqn:awayfrommiddle} (after rearrangement).

Suppose $\ell$ is odd. Consider the linear combination of relations given by 
\begin{align*}
&\rel_0(0;-\eta_{(\ell-1)/2})
-\rel_0(1;2\delta_1-\eta_{(\ell-1)/2})
-zq\, \rel_0(1;\delta_1-\eta_{(\ell-1)/2}) \\
& \quad +\rel_1(1;-\eta_{(\ell-1)/2})
+\rel_2(1;e_1-\eta_{(\ell-1)/2}).
\end{align*}
Expanding this out gives:
\[
\begin{aligned}
&\,S(0;-\eta_{(\ell-1)/2})-\underline{S(1;-\eta_{(\ell-1)/2})}
+\underline{zqS(1;-\eta_{(\ell-1)/2}+\delta_1)}
\notag\\
&-S(1;2\delta_1-\eta_{(\ell-1)/2})+S(2;2\delta_1-\eta_{(\ell-1)/2})-\underline{zq^2S(2;3\delta_1-\eta_{(\ell-1)/2})}
\notag\\
& -\underline{zqS(1;\delta_1-\eta_{(\ell-1)/2})}
+\underline{zqS(2;\delta_1-\eta_{(\ell-1)/2})}-z^2q^3S(2;2\delta_1-\eta_{(\ell-1)/2})
\notag\\
&
+\underline{S(1;-\eta_{(\ell-1)/2})}-\underline{S(1;-\eta_{(\ell-1)/2}+e_1)}-\underline{zqS(2;-\eta_{(\ell-1)/2}+\delta_1)}
\notag\\
&\qquad +zq^2S(2;2\delta_1-\eta_{(\ell-1)/2})
\notag\\
&+\underline{S(1;e_1-\eta_{(\ell-1)/2})}-S(1;e_1+e_2-\eta_{(\ell-1)/2})-zqS(2;e_1-\eta_{(\ell-1)/2}+\delta_1+\delta_2)  \\
& \qquad  + \underline{zq^2S(2;e_1-\eta_{(\ell-1)/2}+2\delta_1+\delta_2)}.
\end{aligned}
\]
Again, collecting the uncancelled terms, and recalling $e_1+\delta_2=\delta_1$, gives
\begin{equation*}
S(0;-\eta_{\frac{\ell-1}2})
-S(1;2\delta_1-\eta_{\frac{\ell-1}2}) -S(1;e_1+e_2-\eta_{\frac{\ell-1}2}) 
+\left(1\!-\!zq\right)\left(1\!+\!zq^2\right)S(2;2\delta_1-\eta_{\frac{\ell-1}2}) =0
\end{equation*}
Now subtract the $i=\frac{\ell-1}2$ case of~\eqref{eqn:Sfourterm} to obtain
\begin{equation*}
S(0;-\eta_{\frac{\ell-1}2})
-S(1;\delta_1 -\eta_{\frac{\ell-3}2})
-S(1;\delta_1-\eta_{\frac{\ell-1}2+1})
+\left(1-zq\right)\left(1+zq^2\right)S(2;2\delta_1-\eta_{\frac{\ell-1}2}) =0,
\end{equation*}
and using the fact that $\eta_{{(\ell-1)/2}+1}=\eta_{{(\ell-1)/2}}$ gives us
\begin{equation*}
S(0;-\eta_{\frac{\ell-1}2})
-S(1;\delta_1 -\eta_{{\frac{\ell-3}2}})
-S(1;\delta_1-\eta_{{\frac{\ell-1}2}})
+\left(1-zq\right)\left(1+zq^2\right)S(2;2\delta_1-\eta_{\frac{\ell-1}2}) =0,
\end{equation*}
which is exactly~\eqref{eqn:middleodd}.

Finally, suppose $\ell$ is even. Consider the linear combination of relations given by 
\begin{equation*}
\rel_0(0;-\eta_{\ell/2})
-\rel_0(1;2\delta_1\!-\eta_{\ell/2})
-zq \rel_0(1;\delta_1\!-\eta_{\ell/2})
+\rel_1(1;-\eta_{\ell/2})
+\rel_2(1;e_1-\eta_{\ell/2}).
\end{equation*}
Expanding this out gives:
\[
\begin{aligned}
&\,S(0;-\eta_{\ell/2})-\underline{S(1;-\eta_{\ell/2})}
+\underline{zqS(1;-\eta_{\ell/2}+\delta_1)}
\notag\\
&-S(1;2\delta_1-\eta_{\ell/2})+S(2;2\delta_1-\eta_{\ell/2})-\underline{zq^2S(2;3\delta_1-\eta_{\ell/2})}
\notag\\
& -\underline{zqS(1;\delta_1-\eta_{\ell/2})}
+\underline{zqS(2;\delta_1-\eta_{\ell/2})}-z^2q^3S(2;2\delta_1-\eta_{\ell/2})
\notag\\
&
+\underline{S(1;-\eta_{\ell/2})}-\underline{S(1;-\eta_{\ell/2}+e_1)}-\underline{zqS(2;-\eta_{\ell/2}+\delta_1)}
+zq^2S(2;2\delta_1-\eta_{\ell/2})
\notag\\
&+\underline{S(1;e_1-\eta_{\ell/2})}-S(1;e_1+e_2-\eta_{\ell/2})-zqS(2;e_1-\eta_{\ell/2}+\delta_1+\delta_2)  \\
&+ \underline{zq^2S(2;e_1-\eta_{\ell/2}+2\delta_1+\delta_2)}.
\end{aligned}
\]
Cancelling as before gives
\begin{equation*}
S(0;-\eta_{\ell/2})
-S(1;2\delta_1\!-\eta_{\ell/2}) -S(1;e_1+e_2-\eta_{\ell/2}) 
+(1-zq)\left(1\!+\!zq^2\right)S(2;2\delta_1\!-\eta_{\ell/2}) = 0.
\end{equation*}
Now, subtracting~\eqref{eqn:Sfourtermevenmiddle} gives the following equation, equivalent to ~\eqref{eqn:middleeven}.
\begin{equation*}
S(0;-\eta_{\ell/2})
-2S(1;\delta_1 -\eta_{{\ell/2}-1})
+(1-zq)\left(1+zq^2\right)S(2;2\delta_1-\eta_{\ell/2})
= 0.
\end{equation*}
\end{proof}

\begin{rem}
\label{rem:allcyl2rowzq}
Here we record the analogue of Theorem \ref{thm:bivar2rowtight} for \emph{all} cylindric partitions of a given two-rowed profile.
Let $\ell\geq 1$, $0\leq b \leq \lfloor\frac{\ell}{2}\rfloor=k$. If $\ell$ is odd, let $s=1$, and $s=2$ otherwise.
Then:
\begin{align}
&C_{(\ell-b,b)}(z,q)=C_{(b,\ell-b)}(z,q)\notag\\
&=\dfrac{1}{(zq)_\infty}
\sum_{N_1\geq N_2\geq \cdots \geq N_k\geq 0 }
\dfrac{z^{N_1}q^{N_1^2+N_2^2+\cdots+N_k^2+N_{b+1}+N_{b+2}+\cdots+N_{k}}}{(q)_{N_1-N_2}(q)_{N_2-N_3}\cdots 
(q)_{N_{k-1}-N_k}
{(q^s;q^s)_{N_k}}}.
\label{eq:allcyl2rowzq}
\end{align}
This follows straightforwardly from the results of Foda and Welsh~\cite[Sec.\ 7, 9]{FodWel}. 
Alternately, one may use $L\rightarrow\infty$ limits of the finitizations of these generating functions found by Warnaar \cite[Eq.\ (7.24), (7.25)]{War-AndGorCyl}.
\end{rem}

\section{Bivariate generating functions: \texorpdfstring{$r$}{r} rows, level \texorpdfstring{$1$}{1}}
Going beyond the $2$-rowed case, we have the following result.
\begin{prop}
\label{prop:rrowlev1}
For $c=(c_1,\cdots, c_r)$ a profile of level $1$, i.e., $c_1+\cdots+c_r=1$,
\begin{align}
T_c(z;q) = 1+\sum_{n\geq 1}z^n\left(\dfrac{(q^r;q^r)_n}{(q)_n} - \dfrac{(q^r;q^r)_{n-1}}{(q)_{n-1}} \right).
\label{eqn:rrowlev1}
\end{align}
\end{prop}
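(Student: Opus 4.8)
The plan is to collapse the Corteel--Welsh-type system of Theorem~\ref{thm:CWrec} to a single functional equation by exploiting the cyclic symmetry of cylindric partitions, and then to identify the claimed series as its unique power-series solution.

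First I would note that a level-one profile has exactly one nonzero coordinate, say $c_k=1$; write $e_k$ for this profile. The rotation of rows $(\pi^{(1)},\ldots,\pi^{(r)})\mapsto(\pi^{(2)},\ldots,\pi^{(r)},\pi^{(1)})$ is a bijection from $\mathscr{C}_{(c_1,\ldots,c_r)}$ onto $\mathscr{C}_{(c_2,\ldots,c_r,c_1)}$ that preserves every part and hence both $\mathrm{max}$ and $\mathrm{wt}$; since tightness is symmetric in the rows, it restricts to a bijection of tight cylindric partitions. Iterating, $T_{e_k}(z,q)$ is independent of $k$, and I write $T(z,q)$ for this common generating function.

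Next I specialize Theorem~\ref{thm:CWrec}. Here $I_c=\{k\}$, so the only admissible subset is $J=\{k\}$, with $|J|=1$; a direct check of the definition of $c(J)$ gives $c(\{k\})=e_{k+1}$ with indices read mod $r$. Hence \eqref{eqn:tightCWrec} becomes the single equation
\[
T(z,q)+\frac{zq^r}{1-zq^r}T(zq^r,q)=\frac{T(zq,q)}{1-zq}.
\]
Writing $T(z,q)=\sum_{n\ge0}t_n(q)z^n$ and extracting the coefficient of $z^N$ turns this into the recurrence $t_N(1-q^N)=(q^N-q^{rN})\sum_{n=0}^{N-1}t_n$. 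Since $1-q^N$ is invertible in $\ZZ[[q]]$ for $N\ge1$, this determines every $t_N$ from $t_0$, and $t_0=1$ because the empty partition is the unique tight cylindric partition with $\mathrm{max}=0$. Thus the functional equation together with $T(0,q)=1$ has a unique solution.

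It remains to verify that the right-hand side of \eqref{eqn:rrowlev1} solves this recurrence. Setting $a_n=(q^r;q^r)_n/(q)_n$, the claimed series telescopes to $(1-z)\sum_{n\ge0}a_nz^n$, so its coefficient of $z^N$ is $a_N-a_{N-1}$ for $N\ge1$ and $1$ for $N=0$; moreover the partial sum $\sum_{n=0}^{N-1}t_n$ telescopes to $a_{N-1}$ (using $a_0=1$). The ratio identity $(1-q^N)a_N=(1-q^{rN})a_{N-1}$ then yields $(a_N-a_{N-1})(1-q^N)=(q^N-q^{rN})a_{N-1}$, which is exactly the recurrence above, and the initial value matches as well. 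Hence the claimed series equals $T(z,q)$; as a consistency check, setting $z=1$ telescopes to $(q^r;q^r)_\infty/(q)_\infty$, in agreement with Theorem~\ref{thm:tightprod}. I expect the cyclic-symmetry bookkeeping---confirming that rotation shifts the profile correctly and that $c(\{k\})=e_{k+1}$---to be the only delicate point, after which the $q$-series verification is routine.
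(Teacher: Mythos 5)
Your proposal is correct, but it takes a genuinely different route from the paper. The paper's proof is a direct enumeration: using circular symmetry to reduce to $c=(1,0,\ldots,0)$, it reads a cylindric partition column-wise as a single weakly decreasing sequence $\pi^{(1)}_1\geq\pi^{(2)}_1\geq\cdots\geq\pi^{(r)}_1\geq\pi^{(1)}_2\geq\cdots$, observes that tightness forces each value (including the maximum $n$) to occur at most $r-1$ times, and writes down the generating function $z^n(q^n+\cdots+q^{(r-1)n})\prod_{1\leq j\leq n-1}(1+q^j+\cdots+q^{(r-1)j})$ for fixed largest part $n$, which is exactly the summand in \eqref{eqn:rrowlev1}. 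You instead collapse Theorem \ref{thm:CWrec} to the single equation $T(z)+\frac{zq^r}{1-zq^r}T(zq^r)=\frac{T(zq)}{1-zq}$ and verify the series by a coefficient recurrence; I checked the delicate points and they hold: the row rotation does send profile $(c_1,\ldots,c_r)$ to $(c_2,\ldots,c_r,c_1)$ and preserves tightness, the definition of $c(J)$ with $c_0=c_r$ gives $c(\{k\})=e_{k+1}$ (indices mod $r$), the coefficient extraction yields $t_N(1-q^N)=(q^N-q^{rN})\sum_{n=0}^{N-1}t_n$, and the telescoping identities $(1-q^N)a_N=(1-q^{rN})a_{N-1}$ and $\sum_{n=0}^{N-1}t_n=a_{N-1}$ are all valid. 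What each approach buys: the paper's argument is shorter and delivers a transparent combinatorial model (level-$1$ tight cylindric partitions are ordinary partitions in which every part repeats at most $r-1$ times, with $z$ tracking the largest part), whereas your argument showcases the functional-equation machinery of Theorem \ref{thm:CWrec} and a uniqueness principle that could in principle extend to higher-level profiles where the system no longer collapses to one equation — precisely the regime the paper flags as open. One small caveat: Theorem \ref{thm:CWrec} assumes $r\geq 2$, so your proof covers $r\geq 2$ (the $r=1$ case of \eqref{eqn:rrowlev1} is trivially $T=1$), which matches the implicit assumption in the paper's own proof.
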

\begin{proof}
Due to circular symmetry, we may assume that $c=(1,0,\cdots,0)$. Let $\pi=(\pi^{(1)}_1,\cdots, \pi^{(r)}_1)\in \mathscr{T}_c$, with $\max(\pi)=n (\geq 1)$.
We shall read $\pi$ column-wise, i.e., 
$ \pi^{(1)}_1 \geq \pi^{(2)}_1 \geq \cdots \geq \pi^{(r)}_1 
\boldsymbol{\geq } \pi^{(1)}_2 \geq \pi^{(2)}_2 \geq\cdots \geq\pi^{(r)}_2 
\boldsymbol{\geq } \pi^{(1)}_3 \geq \cdots$. Due to the profile, note that $\pi^{(r)}_1$ is stacked on top of $\pi^{(1)}_2$, and thus we have $\pi^{(r)}_1\geq \pi^{(1)}_2$ and so on. The part $\max(\pi)=n$ can only occur as $\pi^{(1)}_1,\cdots, \pi^{(j)}_1$ for some $1\leq j<r$, as $j= r$ breaks tightness. Continuing, the next largest part can also occur at most $r-1$ times. 
Therefore, the generating fun ction for tight cylindric partitions with largest part $n (\geq 1)$ is as below; easily seen to equal the corresponding summand in \eqref{eqn:rrowlev1}:
\begin{align}
z^n(q^n+\cdots+q^{(r-1)n})\prod_{1\leq j\leq n-1}(1+q^{j}+\cdots+q^{(r-1)j}).
\end{align}
\end{proof}

\section{Dousse--Hardiman--Konan partitions}\label{sec:DHKpartitions}

\subsection{The basics}
In \cite{DouHarKon}, Dousse, Hardiman, and Konan defined a class of colored partitions, connected them to crystal bases for standard $\ala{sl}_2$ modules, and thus proved a sum-to-product identity encapsulated in Theorem \ref{thm:dhk} below. The aim of this section is to give two alternate proofs of this theorem using $2$-rowed tight cylindric partitions.

\begin{defi}
Let $\ell\geq 1$ and $0\leq a\leq \ell$. Let $\mathrm{DHK}_{a,\ell}$ be the set of $\ell+1$ colored partitions $\lambda_1\geq \lambda_2\geq \cdots\geq \lambda_s=0$, with allowed colors being $0, 1,\cdots, \ell$, satisfying:
\begin{enumerate}
	\item $	\lambda_i-\lambda_{i+1} = |u_i-u_{i+1}|$ for all $i$ where $u_i$ is the color of the part $\lambda_i$,
	\item the color of $\lambda_s=0$ is $a$,
	\item there is exactly one part of size $0$.
\end{enumerate} 
We ignore the last $0$ part while counting the number of parts, i.e., the number of parts of $\lambda_1\geq \lambda_2\ge\cdots\geq \lambda_s=0$ is $s-1$. The symbol $\#$ denotes the number of parts.
We use $D_{a,\ell}(z,q)$ to denote the bivariate generating functions of DHK partitions:
\begin{align}
D_{a,\ell}(z,q) = \sum_{\lambda \in \mathrm{DHK}_{a,\ell}}  z^{\#(\lambda)}q^{\mathrm{wt}(\lambda)}.
\end{align}
Note that the entire partition is uniquely determined solely by its color sequence and the initial part $0$.
\end{defi}
\begin{ex}
Let $\lambda=8_{\textcolor{blue}{2}} + 7_{\textcolor{blue}{3}} + 7_{\textcolor{blue}{3}} + 5_{\textcolor{blue}{1}} + 4_{\textcolor{blue}{0}} + 3_{\textcolor{blue}{1}} + 3_{\textcolor{blue}{1}} + 3_{\textcolor{blue}{1}} + 2_{\textcolor{blue}{0}} + 1_{\textcolor{blue}{1}} + 0_{\textcolor{blue}{0}}$ (here, subscripts are colors).
Then, $\lambda\in \mathrm{DHK}_{0,3}$, $\#(\lambda)=10$, $\mathrm{wt}(\lambda)=43$.
\end{ex}

In \cite{DouHarKon}, using crystal bases for level $\ell$ standard modules for $\ala{sl}_2$, it was proved that:
\begin{thm}[Eq.\ (1.3) \cite{DouHarKon}]
	\label{thm:dhk}
	For $\ell\geq 1$, $0\leq a\leq \ell$, we have:
	\begin{align}
		D_{a,\ell}(1,q) = \dfrac{(q^{a+1}, q^{\ell-a+1},q^{\ell+2};\,q^{\ell+2} )}{(q;q^2)_\infty (q;q)_\infty} = \mathrm{Pr}\left(L_{\ala{sl}_2}((\ell-a)\Lambda_0+a\Lambda_1)\right).
	\end{align}
\end{thm}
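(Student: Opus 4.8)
The plan is to prove Theorem~\ref{thm:dhk} by establishing a weight-preserving bijection between the Dousse--Hardiman--Konan partitions and the $2$-rowed tight cylindric partitions, thereby identifying their generating functions and invoking the product formula that is already available through~\eqref{eqn:tightcylchar} and Theorem~\ref{thm:tightprod}. The key observation driving everything is that both families are governed by level-$\ell$ crystals for $\ala{sl}_2$, so the combinatorial data on both sides ought to match up after the right bookkeeping. Concretely, I expect the color sequence $(u_i)$ of a DHK partition to encode exactly the same information as the sequence of yoke shapes reading an $\mathcal{A}_{\ell-a,a}$ abacus from right to left, since the defining condition $\lambda_i-\lambda_{i+1}=|u_i-u_{i+1}|$ is precisely the statement that consecutive gaps in the partition are controlled by how the yoke shape changes by steps of $\pm1$.

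First I would set up the correspondence between a DHK partition $\lambda\in\mathrm{DHK}_{a,\ell}$ and a tight cylindric partition of profile $(\ell-a,a)$ (or its symmetric partner), using the abacus model of Section~2 as the intermediary. Reading the abacus yoke-by-yoke from the zeroth (infinite-sea, shape $a$) yoke outward, each yoke has a shape in $\{0,1,\dots,\ell\}$, and tightness forces the shape to change by at most one as one moves between adjacent yokes---this is exactly the $|u_i-u_{i+1}|$ difference condition once I match yoke shapes with DHK colors. The terminal color $a$ on the part $0$ matches the shape-$a$ sea, condition (2); the uniqueness of the part of size $0$, condition (3), matches the convention that the infinite sea is anchored at the zeroth yoke. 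The crucial step is to check that this map is a bijection and that it carries $\#(\lambda)$ to $\max(\pi)$ and $\mathrm{wt}(\lambda)$ to $\mathrm{wt}(\pi)$ (or whichever pairing of statistics the $z$- and $q$-tracking dictates), so that $D_{a,\ell}(z,q)=T_{(\ell-a,a)}(z,q)$ as a bivariate identity; specializing $z\mapsto1$ then yields $D_{a,\ell}(1,q)=T_{(\ell-a,a)}(1,q)$.

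Once the bijection is in hand, the product formula is immediate: Theorem~\ref{thm:tightprod} (via~\eqref{eqn:tightcylchar}) gives $T_{(\ell-a,a)}(1,q)=\mathrm{Pr}(L_{\ala{sl}_2}((\ell-a)\Lambda_0+a\Lambda_1))$, and the explicit two-rowed evaluation recorded just after Theorem~\ref{thm:tightprod} rewrites this as the theta-quotient $\frac{(q^{a+1},q^{\ell-a+1},q^{\ell+2};q^{\ell+2})_\infty}{(q;q^2)_\infty(q;q)_\infty}$ after noting $(-q;q)_\infty=1/(q;q^2)_\infty$. This simultaneously recovers the crystal-theoretic right-hand side of Theorem~\ref{thm:dhk}, since both families realize the same principally specialized character. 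An alternative, purely analytic route---matching the promised ``two alternate proofs''---would be to verify that $D_{a,\ell}(z,q)$ satisfies the same diamond relations as $T_{(\ell-a,a)}(z,q)$ (Proposition following Theorem~\ref{thm:CWrec}), perhaps by a direct recurrence on the largest color block, and then appeal to Theorem~\ref{thm:maindiamond} and uniqueness of solutions with the given initial conditions.

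The main obstacle I anticipate is pinning down the precise statistic matching and the exact indexing of the bijection at the boundary. The DHK convention of discarding the final $0$ part when counting $\#(\lambda)$, the color constraints at the two ends of the partition, and the orientation of the abacus (which yoke is ``zeroth'', and whether shape $a$ or $\ell-a$ corresponds to the sea) all have to be reconciled so that colors, gaps, and the two tracked statistics line up on the nose rather than up to an easily-overlooked shift or reflection. Verifying injectivity and surjectivity is combinatorially routine once the dictionary is correct, but getting that dictionary exactly right---especially ensuring $\#(\lambda)\leftrightarrow\max(\pi)$ and that the weight is genuinely preserved rather than merely equidistributed after $z\mapsto1$---is where the real care is needed.
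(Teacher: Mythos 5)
Your overall plan coincides with the paper's: Section \ref{sec:DHKpartitions} proves Theorem \ref{thm:dhk} in exactly the two ways you sketch, once by matching functional equations and once by an abacus bijection carrying $\max(\pi)$ to $\#(\lambda)$ and preserving weight, followed by Theorem \ref{thm:tightprod}. However, the mechanism you propose for the bijection contains a genuine error. You assert that ``tightness forces the shape to change by at most one as one moves between adjacent yokes'' and that the DHK condition $\lambda_i-\lambda_{i+1}=|u_i-u_{i+1}|$ records shape changes ``by steps of $\pm1$.'' Neither is true: condition (1) of the DHK definition places no bound on $|u_i-u_{i+1}|$, and the paper's working example has shape sequence $0,1,0,1,1,1,0,1,3,3,2$, where adjacent yoke shapes jump from $1$ to $3$ (correspondingly the DHK partition contains $\cdots+5_{\textcolor{blue}{1}}+7_{\textcolor{blue}{3}}+\cdots$). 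The fact you actually need is quantitative rather than a $\pm1$ constraint: in a \emph{tight} abacus, the number of vacancies between two adjacent yokes of shapes $\alpha$ and $\beta$ is exactly $|\alpha-\beta|$ (Foda--Welsh \cite[Eq.\ (31)]{FodWel}). The parts $\lambda_i$ are then the cumulative vacancy counts to the left of each yoke, so the gap condition holds automatically; weight preservation and $\#(\lambda)=\max(\pi)$ follow because the DHK partition is the conjugate of the ``flattened'' cylindric partition, and conjugation preserves weight while exchanging largest part with number of parts. Had your $\pm1$ claim been correct, your map would only produce DHK partitions with unit color steps and would miss most of $\mathrm{DHK}_{a,\ell}$, so surjectivity would fail; with the Foda--Welsh vacancy count in its place, the rest of your verification goes through as in the paper.

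A smaller misdirection: for your ``alternative analytic route'' you point to the diamond relations and Theorem \ref{thm:maindiamond}, but those govern the multisums of Theorem \ref{thm:bivar2rowtight} and arise from inclusion--exclusion over maximal parts, which is not how DHK partitions naturally decompose. Your instinct about recursing on a color block is right, but it is the block of \emph{smallest} nonzero parts one strips: doing so yields the recurrence \eqref{eqn:dhkrec}, identical in form to the tight-cylindric recurrence \eqref{eqn:tightrec2} of Proposition \ref{prop:2rowtightrec} (the one \emph{without} inclusion--exclusion), and uniqueness of the solution with initial conditions $D_{a,\ell}(0,q)=D_{a,\ell}(z,0)=1$ gives $D_{a,\ell}(z,q)=T_{(\ell-a,a)}(z,q)$ directly; specializing $z\mapsto1$ and applying Theorem \ref{thm:tightprod} then finishes, exactly as in the paper's first proof.
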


\subsection{Proof using recurrences}
\begin{prop}
	\label{prop:dhkrec}
    We have, for $\ell\geq 1$ and $0\leq a \leq \ell$, the following:
\begin{align}
D_{a,\ell}(z,q) &= 1 + \sum_{\substack{0\leq j\leq \ell\\ j\neq a}} \dfrac{zq^{|j-a|}}{1-zq^{|j-a|}}D_{j,\ell}(zq^{|j-a|},q),
\label{eqn:dhkrec}
\\
D_{a,\ell}(0,q) = D_{a,\ell}(z,0)&=1.
\end{align}
\end{prop}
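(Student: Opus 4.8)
The plan is to prove the functional equation \eqref{eqn:dhkrec} by a direct combinatorial argument that classifies each nonempty partition $\lambda\in\mathrm{DHK}_{a,\ell}$ according to the color $j$ of its \emph{largest} part $\lambda_1$. The key observation is that DHK partitions are entirely determined by their color sequence together with the forced final $0_{\textcolor{blue}{a}}$, since condition (1) dictates that the gap $\lambda_i-\lambda_{i+1}=|u_i-u_{i+1}|$ is a function of consecutive colors only. Thus I would read the partition ``from the bottom up'', building it by reading colors starting from the mandatory $0_{\textcolor{blue}{a}}$.

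First I would handle the empty case: the partition consisting of only the single $0_{\textcolor{blue}{a}}$ has $\#=0$ and $\mathrm{wt}=0$, contributing the isolated $1$ on the right-hand side. For a nonempty $\lambda$, I would let $j=u_1$ be the color of the largest part $\lambda_1$, and seek to ``peel off'' the relationship between $\lambda_1$ and the rest. The cleaner reorganization, given the structure of the claimed recurrence, is to peel from the \emph{top}: remove $\lambda_1$ and observe that the remaining partition $\lambda_2\geq\cdots\geq\lambda_s=0$ is again a DHK partition, but now with its largest part $\lambda_2$ having some color $j'$. The size $\lambda_1=\lambda_2+|j-j'|$, and iterating the process of prepending a largest part corresponds exactly to the operation in \eqref{eqn:addboxes}-style substitution: prepending a single part of color $j$ on top of a DHK partition whose current top color is $a'$ multiplies the generating function by $zq^{|j-a'|}$ and shifts. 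I would make this precise by summing over the color $j$ of the new top part and over how many consecutive copies of that top part are added, yielding a geometric factor $\frac{zq^{|j-a|}}{1-zq^{|j-a|}}$ once I set up the recursion so that the ``base color'' seen by the prepended block is $a$.

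The mechanism producing the geometric series is that condition (1) forces \emph{equal} consecutive parts to have equal colors (since $\lambda_i-\lambda_{i+1}=0$ iff $|u_i-u_{i+1}|=0$), so a maximal block of repeated top parts all share color $j$, and such a block of length $t\geq 1$ contributes $z^t q^{t|j-a|}$ relative to the underlying smaller DHK partition of color $j$; summing over $t\geq 1$ gives $\frac{zq^{|j-a|}}{1-zq^{|j-a|}}$, and the residual partition is tracked by $D_{j,\ell}(zq^{|j-a|},q)$, exactly matching each summand on the right side of \eqref{eqn:dhkrec}. Summing over all admissible new top colors $j\neq a$ (the case $j=a$ would mean no strict increase, i.e.\ the new block merges into the base and is already accounted for) recovers the full right-hand side. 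The initial conditions $D_{a,\ell}(0,q)=D_{a,\ell}(z,0)=1$ are immediate: setting $z=0$ kills all terms with at least one part, and setting $q=0$ forces every part to equal $0$, leaving only the single allowed $0_{\textcolor{blue}{a}}$.

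The main obstacle I anticipate is getting the \emph{bookkeeping of the base color correct}: the recurrence as stated always measures the exponent $|j-a|$ against the fixed profile color $a$, whereas a naive top-down peel would compare against the color of the \emph{current} top part, which changes at each step. The resolution is to recognize that \eqref{eqn:dhkrec} is not a simple one-step peel but the self-referential fixed-point equation satisfied by the generating function, precisely analogous to Proposition~\ref{prop:2rowtightrec} for tight cylindric partitions; indeed the structural parallelism with \eqref{eqn:tightrec2} (both have the shape $D_{a,\ell}=1+\sum_{j\neq a}\frac{zq^{|j-a|}}{1-zq^{|j-a|}}D_{j,\ell}(zq^{|j-a|})$) is the whole point, reflecting that both families are governed by the same $\ala{sl}_2$ crystal combinatorics. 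So I would argue that a nonempty $\lambda$ whose \emph{bottom} block (the maximal run of equal smallest nonzero parts immediately above the forced $0_{\textcolor{blue}{a}}$) has color $j$ and length $t\geq1$ contributes, after removing that $0_{\textcolor{blue}{a}}$ and reindexing colors so that $j$ becomes the new anchor, a factor $z^tq^{t|j-a|}$ times a DHK partition of profile $j$ evaluated at $zq^{|j-a|}$; summing $t\geq1$ and $j\neq a$ gives \eqref{eqn:dhkrec}. Verifying that this peel-from-the-bottom bijection is well defined and weight/color-preserving is the one step requiring genuine care, but it is routine once the color-determines-gap principle is invoked.
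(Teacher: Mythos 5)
Your proposal is correct and, in its final form, coincides with the paper's own proof: both peel off the maximal bottom run of smallest nonzero parts (necessarily of some color $j\neq a$ and size $|j-a|$, since equal parts share a color and a color-$a$ part adjacent to $0_{a}$ would be forced to size $0$), sum the geometric series over the run length $t\geq 1$, and recurse into $D_{j,\ell}(zq^{|j-a|},q)$ after re-anchoring at $0_{j}$. The top-down peel you first sketch is indeed the wrong bookkeeping, but you correctly abandon it, so no gap remains.
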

\begin{proof}
Discard the partition that has a single part $0$ with color $a$. This contributes the $1$ on the right side of \eqref{eqn:dhkrec}. 
The remaining partitions have a smallest part that is not $0$ and this part can have any color from $0$ to $\ell$ except $a$, since having the color $a$ would force two $0$s to occur. 
Thus, the partitions start as: $\cdots 
+\underbrace{|j-a|_{j} + |j-a|_j+ \cdots + |j-a|_j}_{t\,\, \mathrm{times}}+0_a$ for some $j\in\{0,1,\cdots,\ell\}\backslash \{a\}$, $t\geq 1$.
Such partitions can be obtained by taking partitions counted by $D_{j,\ell}$, deleting the initial $0_j$ part, adding $|j-a|_j$ to each part, then attaching a string of $t$ parts, each of size $|j-a|_j$, and finally attaching a $0_a$ part. This contributes $zq^{|j-a|t}D_{j,\ell}(zq^{|j-a|},q)$ to the right side of \eqref{eqn:dhkrec}. Summing over $t\geq 1$ and $j\in \{0,1,\cdots,\ell\}\backslash \{a\}$ now gives us the summation in the right side of \eqref{eqn:dhkrec}.

\end{proof}

It is now clear that one can prove Theorem \ref{thm:dhk} by noting that $\mathrm{DHK}$ partitions and the corresponding tight cylindric partitions have the same bivariate generating functions, being the unique solutions to the same set of functional equations given in Propositions \ref{prop:2rowtightrec} and \ref{prop:dhkrec}. Since the univariate generating functions of tight-cylindric partitions have the product formula as in  Theorem \ref{thm:tightprod}, so do the $\mathrm{DHK}$ partitions.

\subsection{A bijection with tight cylindric partitions}

This proof is not at all surprising as both (tight) cylindric partitions and the $\mathrm{DHK}$ partitions arise from crystal bases.

We associate a ordinary partition to a $2$-string abacus of type $\mathcal{A}_{\ell-a,a}$.  Starting at the zeroth yoke, record the total number of vacancies to the left of the $i^{\text{th}}$ yoke. Continuing on with our example from the end of section 2, the total number of vacancies are $ 0,1,2,3,3,3,4,5,7,7,8$. (This can be alternately obtained by ``flattening'' the $2$-rowed cylindric partition, taking the conjugate, and adjoining the ``ground'' $0$.) Naturally, the number of non-zero entries is the number of (non-zero) yokes.

Next, we record the shape of each yoke, i.e., how far the top bead is shifted with respect to the bottom bead. The shape sequence, starting at the zeroth yoke, for our working example is thus
$0, 1,0,1,1,1,0,1,3,3,2$.

Combining these two sequences in a colored partition, we obtain:
$0_{\textcolor{blue}{0}} + 1_{\textcolor{blue}{1}} + 2_{\textcolor{blue}{0}} + 3_{\textcolor{blue}{1}} + 3_{\textcolor{blue}{1}}+ 3_{\textcolor{blue}{1}} + 4_{\textcolor{blue}{0}} + 5_{\textcolor{blue}{1}} + 7_{\textcolor{blue}{3}}+7_{\textcolor{blue}{3}}+8_{\textcolor{blue}{2}}$.

It can be easily checked that the number of vacancies between two adjacent yokes of shapes $\alpha$ and $\beta$ is $|\alpha-\beta|$ (see \cite[Eq.\ (31)]{FodWel}), assuming that the abacus is tight. 
Reversing the resulting colored sequence of non-negative integers, we have thus obtained a DHK partition of class $\mathrm{DHK}_{a,\ell}$.

It is easily checked that this process is reversible, starting from a DHK partition: just place the yokes of required types given by colors and then count beads to the right of any vacancy on each row.

\noindent	\textbf{Summary of the proof}: Starting with a tight $2$-rowed cylindric partition of profile $(\ell-a,a)$, we  obtain a tight $2$-string abacus of type $\mathcal{A}_{\ell-a,a}$, from which we produce a colored partition of type $\mathrm{DHK}_{a,\ell}$.  This is the required bijection, and it takes the maximum part in the tight cylindric partition to the number of (non-zero) parts in the $\mathrm{DHK}$ partition.

\section{Questions}

Some natural questions and directions for further investigation are collected here.

\begin{enumerate}[leftmargin=*]
\item It would be highly desirable to have a direct proof of Theorem \ref{thm:bivar2rowtight}.
\item To the best of our knowledge, nothing seems to be known about the bivariate generating functions in the tight $r$-rowed case with $r\geq 3$, 
except of course the level $1$ case of Proposition \ref{prop:rrowlev1}.
\item With $z\mapsto 1$, the sums in Theorem \ref{thm:bivar2rowtight} are the same as in the theorems of Andrews \cite{And-parity} and Kim--Yee \cite{KimYee-parity}. These theorems are in turn are related to Kleshchev multipartitions for $\ala{sl}_2$ \cite{CheEtAl} (see also \cite{Tsu-RRKleshchev}, 
\cite{FodLecOkaThiWel}). Is there a direct bijection relating Kleshchev multipartitions and tight cylindric partitions?
\item For both tight and not necessarily tight cylindric partitions, computer experiments suggest that the polynomial in $z$ for each coefficient of $q^n$ has the following property: The coefficients first rise strictly, then the maximum is reached (possibly at one or two adjacent powers of $z$) and then the coefficients decrease strictly. It would be interesting to prove this.
\item In terms of representation theory,  what does the ``maximum part'' statistic mean? 
\item Are there analogs of other results about cylindric partitions (say, those of Gessel and Krattenthaler~\cite{GesKra}) for tight cylindric partitions? Kur\c sung\"oz and \"Omr\"uuzun Seyrek~\cite{KS} showed that cylindric partitions (of a particular profile) are in bijection with ordinary partitions and colored partitions into distinct parts (with certain restrictions); is there an analog for tight cylindric partitions?
\end{enumerate}


\end{document}